\newcommand\dslash{d\llap {\raisebox{.9ex}{$\scriptstyle-\!$}}}
\newtheorem{theorem}{Theorem}[section]
\newtheorem{lemma}[theorem]{Lemma}
\newtheorem{proposition}[theorem]{Proposition}
\newtheorem{remark}{Remark}
\newcommand{\R}{\mathbb{R}}
\def\N{\mathbb N}
\def\C{\mathbb C}
\def\ta{\tilde{\alpha}}
\def\tb{\tilde{\beta}}
\def\tg{\tilde{\gamma}}
\def\ve{\varepsilon}
\def\peso{(2\pi)^{-n}}
\def\pxi{\langle \xi \rangle}
\def\px{\langle x \rangle}
\def\pd{\langle D \rangle}
\newcommand{\veps}{\varepsilon}
\newcommand{\db}{\;\rule[2.3mm]{1.9mm}{.2mm}\!\!\!d}
\begin{document}
\catcode`\@=11


  \renewcommand{\theequation}{\thesection.\arabic{equation}}
  \renewcommand{\section}%
  {\setcounter{equation}{0}\@startsection {section}{1}{\z@}{-3.5ex plus -1ex
   minus -.2ex}{2.3ex plus .2ex}{\Large\bf}}
\title{\bf Decay estimates for solutions \\ of nonlocal semilinear equations}
\author{Marco Cappiello $^{\textrm{a}}$, Todor Gramchev $^{\textrm{b}}$ and Luigi Rodino $^{\textrm{c}}$ \\ \\
}
\date{}
\maketitle

\begin{abstract}
We investigate the decay for $|x|\rightarrow \infty$ of weak Sobolev type solutions of semilinear nonlocal equations $Pu=F(u)$. We consider the case when $P=p(D)$ is an elliptic Fourier multiplier with polyhomogeneous symbol $p(\xi)$ and derive sharp algebraic decay estimates in terms of weighted Sobolev norms. In particular, we state a precise relation between the singularity of the symbol at the origin and the rate of decay of the corresponding solutions. Our basic example is the celebrated Benjamin-Ono equation \begin{equation} \label{BO}(|D|+c)u=u^2, \qquad c>0,\end{equation} for internal solitary waves of deep stratified fluids. Their profile presents algebraic decay, in strong contrast with the exponential decay for KdV shallow water waves.
\end{abstract}

\section{Introduction}
The main goal of the present paper is to investigate the appearance of algebraic decay at infinity for weak solutions of semilinear nonlocal elliptic equations of the form 
\begin{equation} \label{equation}
Pu= F(u),
\end{equation}
where $P=p(D)$ is a Fourier multiplier in $\R^n$:
\begin{equation}
\label{FM}
Pu(x)= \int_{\R^{n}}e^{ix\xi}p(\xi) \hat{u}(\xi) \dslash \xi,
\end{equation}
with $\hat{u}(\xi)= \int_{\R^{n}}e^{-ix\xi}u(x)dx$, $\dslash \xi = \peso d\xi$ and $F(u)$ is a polynomial vanishing of order $k\geq 2$ at $u=0$, namely 
\begin{equation} \label{nonl1}
F(u) = \sum_{j=2}^N F_j u^j, \qquad F_j \in \C.
\end{equation}
For $P$ operator with constant coefficients, i.e. $p(\xi)$ polynomial, or more general Fourier multiplier, equations of the form \eqref{equation} arise frequently in Mathematical Physics in the theory of solitary waves for nonlinear evolution equations.
Relevant examples are equations in the realm of wave motions featuring both dispersion and diffusion processes, long internal waves and the interface between two fluids of different densities, and semilinear Schr\"odinger equations. Let us recall in short, in the case $x \in \R$: starting from an evolution equation of the form 
$v_t  +(Pv)_x = F(v)_x,$ with $t \geq 0,$ solitary waves are solutions of the form $v(t,x)=u(x-ct), c>0$. Looking for this type of solutions one is indeed reduced to study the elliptic equation \eqref{equation}. \\ There are no general methods for deriving the existence of such special solutions and in the known examples the special features like conservation laws and/or the presence of symmetries play a fundamental role. On the other hand, it is natural to study regularity and behaviour at infinity of this type of waves in order to have a global knowledge of their profile.
In the fundamental papers \cite{BL1, BL2}, Bona and Li proved that if $p(\xi)$ is analytic on $\R$, then every solution $u \in L^{\infty}(\R)$ of \eqref{equation} such that $u(x) \rightarrow 0$ for $x \rightarrow \pm \infty$, exhibits an exponential decay of the form $e^{-\veps|x|}, \veps>0$ for $|x|\rightarrow \infty$ and extends to a holomorphic function in a strip of the form $\{z \in \C: |\Im z|<T\}$ for some $T>0.$ The researches in \cite{BL1, BL2} were motivated by the applications to the study of decay and analyticity of solitary waves for KdV-type, long-wave-type and Schr\"odinger-type equations. In \cite{BG} and \cite{CGR2}, the results of \cite{BL2} have been extended in arbitrary dimension to analytic pseudodifferential operators, deriving sharp estimates in the frame of the Gelfand-Shilov spaces of type $\mathcal{S}$, cf. \cite{GS2}, which give a simultaneous information on the decay at infinity and the Gevrey-analytic regularity on $\R^n$.  Recently, the results on the holomorphic extensions have been refined in \cite{CN1}, \cite{CN2}. \\ Here we want to consider the case when $p(\xi)$ is only finitely smooth at $\xi=0.$ Namely the symbol $p(\xi)$ is assumed to be a sum of positively homogeneous terms and we are interested in the nonlocal case, i.e. at least one of  these terms is not a polynomial, hence $p(\xi)$ is only finitely smooth at the origin. In this case, the functional analytic machinery and the pseudodifferential calculus used in the above mentioned papers are not applicable. Motivation for this type of study comes from two directions. The first is the presence of several nonlinear models in the theory of solitary waves in which the symbol of the linear part presents singularities or finite smoothness at $\xi=0.$
The most celebrated equation in this category is the so called Benjamin-Ono equation \eqref{BO}, cf. \cite{AT, B1, IKK, O}, which will be considered in detail in the next Section \ref{examples}. \\ Another more general issue comes from the novelty with respect to the general theory on decay and regularity estimates for linear and nonlinear elliptic equations in $\R^{n}$; besides \cite{BG, BL1, BL2,  CGR2, CN1, CN2}, see for example \cite{AG, CGR3, Co, Mc, McL, McL2, Ra}. In fact, we are not aware of any result for general semilinear elliptic equations of the form \eqref{equation} in the case of finitely smooth symbols. As a first step, in this paper we will focus on the decay of the solutions with the purpose of treating analytic regularity and holomorphic extensions in a future work. With respect to the case of smooth or analytic symbols, a finite smoothness of the symbol of the linear part of \eqref{equation} determines the loss of the rapid or exponential decay observed in all the above mentioned papers and the appearance of an algebraic decay at infinity. We want to state a precise relation between the regularity of the symbol $p(\xi)$ and the rate of decay at infinity of the solutions. This will be given, as a natural choice, in terms of estimates in the weighted Sobolev spaces $$H^{s,t}(\R^{n}):= \{u \in \mathcal{S}'(\R^{n}): \|u\|_{s,t}=\|\px^{t}\pd^{s}u\|_{L^{2}(\R^n)}<\infty \}, \quad s,t \in \R,$$
where $\px=(1+|x|^{2})^{1/2}$ and $\pd^{s}$ denotes the multiplier with symbol $\pxi^{s}.$ Notice that for $t=0, H^{s,0}(\R^{n})$  coincides with the standard Sobolev space $H^{s}(\R^{n}).$ In the sequel we shall denote as standard by $\|\cdot \|_s$ the norm $\|\cdot\|_{s,0}.$ We refer the reader to \cite{Co} for a detailed presentation of the properties of these spaces. \\ Let us now detail the class of operators $p(D)$ to which our results apply. 
We shall consider Fourier multipliers with symbols of the following type 
\begin{equation} \label{linop1}
p(\xi) = p_0+\sum_{j=1}^h  p_{m_j} (\xi)
\end{equation}
where $p_0 \in \C$ and $p_{m_j}(\xi) \in C^{\infty}(\R^{n}\setminus 0)$ are (positively) homogeneous symbols of order $m_j,$ i.e. $p_{m_{j}}(\lambda \xi)=\lambda^{m_{j}}p(\xi)$ for $\lambda >0$, with $0<m_1<m_2<\ldots<m_h=M$.
We assume that $M \geq 1$ and that the following global ellipticity condition holds: 
\begin{equation} 
\inf_{\xi\in \R^n}(\pxi^{-M}|p(\xi)|)  > 0. \label{ell} \end{equation}
Since $p(0)=p_0$, condition \eqref{ell} implies in particular that $p_0\neq 0$.\\
Set moreover
\begin{equation} \label{linop1b}
m:=\min\{m_j : p_{m_j}  \textrm{ is not polynomial }  \}.
\end{equation}
We shall call $m$ the \textit{singularity index} of $p(\xi)$. When the set in the right-hand side of \eqref{linop1b} is empty, then $P=p(D)$ is a partial differential operator with constant coefficients and we go back to the above mentioned results of exponential decay. \\

Our main result is the following.

\begin{theorem}\label{thm1}
Let $m \in \R$ with  $ [m]>n/2$ and let $P$ be an operator with symbol $p(\xi)$ of the form \eqref{linop1}, \eqref{linop1b} satisfying the assumption \eqref{ell}. Assume that $u$ is a solution of \eqref{equation} such that $u \in H^{s,\ve_o}(\R^n)$ for some $s>n/2$ and for some $\veps_o >0$. 
Then, $u \in C^{\infty}(\R^n)$ and for every $\alpha \in \N^n$ and $\veps>0$ we have $$\partial^{\alpha}u \in H^{s,|\alpha|+m+n/2-\veps}(\R^n),$$ i.e. the following estimate holds
\begin{equation}\label{decay}\| \px^{m+n/2-\veps} x^{\beta} \partial^{\alpha}u\|_{s} <\infty \end{equation}
for every $\alpha, \beta \in \N^n$, with $|\beta|\leq |\alpha|$. Under the same assumptions on $p(D)$ and $u$ the same result holds for solutions of the equation \begin{equation}
\label{fequation} p(D)u=f+F(u),\end{equation} where $f$ is a given smooth function satisfying \eqref{decay}.
\end{theorem}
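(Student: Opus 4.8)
\noindent\textit{Proof plan.} The plan is to run two nested bootstraps --- an elliptic regularity bootstrap for the $C^\infty$ statement and a bootstrap in the scale $H^{s,t}$ for the decay --- the only non-classical ingredient being the multiplier $q(\xi):=p(\xi)^{-1}$, which by \eqref{ell} is globally defined with $|q(\xi)|\le c^{-1}\pxi^{-M}$ and is a classical symbol of order $-M$ for $|\xi|\ge1$, but is only \emph{finitely smooth at $\xi=0$}. The first step is to pin down this local behaviour. Writing $p(\xi)=p_0(1+w(\xi))$ with $w=p_0^{-1}\sum_j p_{m_j}$, for small $|\xi|$ one has $q=p_0^{-1}\sum_{k\ge0}(-w)^k$; grouping by homogeneity and using that, by the definition \eqref{linop1b} of the singularity index, every homogeneous component of $p$ of degree $<m$ is a polynomial, one obtains $q(\xi)=Q(\xi)+\widetilde q(\xi)$ near $0$, where $Q$ is a genuine polynomial of degree $<m$ and $\widetilde q\in C^\infty(\R^n\setminus0)$ satisfies $|\partial_\xi^\gamma\widetilde q(\xi)|\lesssim|\xi|^{m-|\gamma|}$ for $|\xi|\le\frac12$, with leading homogeneous term $-p_0^{-2}p_m$. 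Fixing $\chi\in C_c^\infty$ equal to $1$ near $0$ and supported in $\{|\xi|<\frac12\}$, this gives: $q\chi\in H^{m+n/2-\varepsilon}(\R^n)$ for every $\varepsilon>0$ (a compactly supported symbol of order $m>-n/2$), $\partial_\xi^\beta q\cdot\chi\in H^{m-|\beta|+n/2-\varepsilon}$, $q(1-\chi)$ a classical symbol of order $-M$, and the multiplier $q\chi(D)$ regularizing.

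Since $s>n/2$, $H^{s,t}$ is a multiplicative algebra with $\|\px^{t_1+t_2}gh\|_s\lesssim\|\px^{t_1}g\|_s\,\|\px^{t_2}h\|_s$ (see \cite{Co}), and since $F$ vanishes of order $\ge2$ at $0$, $u\in H^{s,t}$ forces $F(u)\in H^{s,2t}$ (likewise the datum $f$ is carried along, as it satisfies \eqref{decay}). I would feed this into the identity $\hat u=q\,\widehat{F(u)}$ (resp. $q(\hat f+\widehat{F(u)})$): splitting $q=q\chi+q(1-\chi)$, the $q(1-\chi)$ part contributes, via the standard weighted pseudodifferential calculus of \cite{Co}, a term in $H^{s+M,2t}$, while using $q\chi\in H^{m+n/2-\varepsilon}$ and the Sobolev product $H^{m+n/2-\varepsilon}\cdot H^{2t}\subset H^{\min(m+n/2-\varepsilon,\,2t)}$ the $q\chi$ part contributes a term in $H^{s,\min(m+n/2-\varepsilon,\,2t)}$. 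Starting from the hypothesis $u\in H^{s,\varepsilon_o}$ and iterating $t\mapsto\min(m+n/2-\varepsilon,\,2t)$, the weight increases geometrically until it reaches the cap, so $u\in H^{s,m+n/2-\varepsilon}$ for every $\varepsilon>0$ after finitely many steps; re-running the bootstrap with this weight fixed (the $q(1-\chi)$ part gaining $M$ derivatives, the $q\chi$ part regularizing) gives $u\in H^{\infty,m+n/2-\varepsilon}$, in particular $u\in C^\infty$ with the corresponding algebraic decay of all derivatives.

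The last ingredient is an asymptotic-expansion upgrade: a further bootstrap on $\hat u=q\,\widehat{F(u)}$ shows that $u$ (and $f$) admits an asymptotic expansion at infinity into terms homogeneous of decreasing degree, the leading one of degree $-(m+n)$; equivalently $\hat u$ is, near $\xi=0$, a polynomial plus a (classical) conormal symbol of order $\ge m$ --- with leading term $-p_0^{-2}\widehat{F(u)}(0)\,p_m$ --- and is $C^\infty$ and rapidly decreasing for $\xi\ne0$. This is where the finite smoothness, i.e. the singularity index $m$, truly enters: near $0$, $\widehat{F(u)}$ is a polynomial plus a conormal symbol of order $\ge m$ (its singularity is in fact of order $2m+n$, since $F(u)\sim u^2$ decays like $|x|^{-2(m+n)}$), and multiplying by $q=Q+\widetilde q$ keeps the polynomial$\times$polynomial part polynomial while every remaining product is conormal of order $\ge m$; peeling off the leading term and iterating yields the full expansion. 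Granting this, the derivative estimates are essentially formal. Indeed $\partial^\alpha u$ is again a classical symbol at infinity, now of order $-(|\alpha|+m+n)$, hence integrable (as $m+n>0$), so $\widehat{\partial^\alpha u}=(i\xi)^\alpha\hat u$ is, near $\xi=0$, a polynomial vanishing to order $\ge|\alpha|$ plus a conormal symbol of order $\ge|\alpha|+m$, and away from $\xi=0$ it decays rapidly with as many derivatives as one needs. Consequently, for $|\beta|\le|\alpha|$, the function $\widehat{x^\beta\partial^\alpha u}=(\pm i)^{|\beta|}\partial_\xi^\beta\widehat{\partial^\alpha u}$ is, near $0$, a polynomial plus a conormal symbol of order $\ge|\alpha|+m-|\beta|\ge m>-n/2$ --- hence locally in $H^{m+n/2-\varepsilon}$ --- and of Schwartz type away from $0$; therefore $\widehat{x^\beta\partial^\alpha u}\in H^{m+n/2-\varepsilon,\,s}$, i.e. $x^\beta\partial^\alpha u\in H^{s,m+n/2-\varepsilon}$, which is exactly \eqref{decay}. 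Finally $\partial^\alpha u\in H^{s,\,|\alpha|+m+n/2-\varepsilon}$ is equivalent to \eqref{decay} for all $|\beta|\le|\alpha|$, since $x^\beta\px^{-|\alpha|}$ is a bounded symbol in $x$ and $\px^{|\alpha|}\lesssim 1+\sum_{|\beta|=|\alpha|}|x^\beta|$.

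The main obstacle is everything built around the finite smoothness of $q$ at $\xi=0$: because $q\notin C^\infty$ there, neither the pseudodifferential calculus nor the Gelfand--Shilov framework used for analytic symbols in \cite{BG,BL2,CGR2} applies, and one must proceed by hand with the splitting $q=q\chi+q(1-\chi)$, the multiplicative structure of the spaces $H^{s,t}$, and --- for the sharp power $|\alpha|$ in \eqref{decay}, which is precisely where the singularity index governs the rate $m+n/2-\varepsilon$ --- the careful propagation of the conormal/asymptotic expansion of $\hat u$ at the origin; establishing that expansion (the coupled bootstrap above) is the most delicate point. The inhomogeneous equation \eqref{fequation} needs no change, since $f$ enters exactly like $F(u)$: it satisfies \eqref{decay}, hence $\hat f$ has at $\xi=0$ the same form ``polynomial $+$ conormal symbol of order $\ge m$''.
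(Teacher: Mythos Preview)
Your plan is a genuinely different route from the paper's. The paper never writes $q=p^{-1}$ as a symbol and makes no use of conormal or asymptotic expansions; it works entirely on the physical side. After a preliminary lemma pushing the initial datum up to $H^{s+1,1}$, the argument is an induction on $|\rho+\alpha|$ (with $|\rho|\le k_{cr}:=\max\{j\in\N:j<m+n/2\}$, $|\beta|\le|\alpha|$) showing $x^{\rho+\beta}D^\alpha u\in H^s$. The engine is two explicit commutator identities (Propositions~\ref{commid} and~\ref{commid2}) expressing $[x^\beta D^\alpha,P]$ and $[x^\rho,q(D)]$ as sums of lower-order terms of the form $p_m^{\gamma,\tilde\gamma,\sigma}(D)(x^{\rho-\sigma+\tilde\beta}D^{\tilde\alpha}u)$ with $p_m^{\gamma,\tilde\gamma,\sigma}=D_\xi^\sigma(\xi^{\tilde\gamma}D_\xi^\gamma p_m)$; the needed bounds are Lemma~\ref{homo} (these multipliers are $P$-bounded when $|\sigma|\le[m]$) and, for the singular range $|\sigma|>m$, an $H^s_1\to H^s$ estimate for cutoff multipliers with homogeneous symbol of order in $(-n/2,0)$ (Lemma~\ref{homodist}), closed by H\"older since $[m]>n/2$ forces $\px^{-|\sigma|+1}\in L^2$. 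A second step adds the fractional weight $\px^\tau$, $0<\tau<1$, via a commutator lemma for $[\px^r,(\varphi q)(D)]$ (Lemma~\ref{fractional}).

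What your Fourier-side strategy buys is a transparent explanation of the exponent $m+n/2$ (it is exactly the local Sobolev regularity of $q\chi$ at $\xi=0$) and of the gain $|\alpha|$ (multiplication by $\xi^\alpha$ raises the conormal order); what the paper's approach buys is that it never leaves the weighted Sobolev scale, so the ``asymptotic-expansion upgrade'' you rightly flag as most delicate is bypassed altogether. That step is the one place your plan is genuinely thin: to obtain \eqref{decay} for \emph{all} $\alpha$ you need $\hat u$ near $0$ to be a polynomial plus a \emph{classical} conormal symbol of order $\ge m$ --- mere membership in $H^{m+n/2-\varepsilon}$ does not survive once $|\alpha|$ exceeds roughly $m+n/2$ --- and making the coupled bootstrap (feed the structure of $\hat u$ into the convolution $\widehat{F(u)}=\hat u*\cdots*\hat u$, multiply back by $q=Q+\tilde q$, peel off the leading term, iterate) rigorous takes considerably more than the sentence you give it. The paper's commutator machinery is precisely what replaces this step.
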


Since \eqref{decay} implies 
\begin{equation}\label{decay2}
\| \px^{m+n/2-\veps}u\|_{L^{2}}< \infty,
\end{equation}
arguing roughly in terms of algebraic decay, we expect $u(x)=O(|x|^{-m-n})$ for $|x| \rightarrow \infty.$ This corresponds indeed to the behaviour of the solutions of the Benjamin-Ono equation and some similar equations which we shall solve explicitly in Section \ref{examples}. Note however that general results of existence and uniqueness are out of reach of our methods, addressed only to the qualitative behaviour of the solutions, see Sections 3 and 4 for the proof of Theorem \ref{thm1}. \\ Finally, we observe that the second part of Theorem \ref{thm1} turns out to be new also for linear equations, i.e. when $F(u)=0$ in \eqref{fequation}, whereas the first part is trivial in this case since the homogeneous equation $p(D)u=0$ admits only the solution $u=0$, the nonlinearity being essential to produce non-trivial solutions when $f=0$. 


\begin{section}{Examples} \label{examples}
This section is devoted to the analysis of some examples and models satisfying the assumptions of Theorem \ref{thm1}. In particular we shall test on these models the sharpness of our decay estimates. The first and more important example is given by the Benjamin-Ono equation in hydrodynamics.\\

\noindent
\textbf{Benjamin-Ono equation.} The Benjamin-Ono equation was introduced by Benjamin \cite{B1} and Ono \cite{O} and describes one-dimensional internal waves in stratified fluids of great depth. It reads as follows:
\begin{equation}\label{tBO}
\partial_t v + H(\partial_x^2v)+2v\partial_x v=0, \qquad t \in \R, x \in \R,
\end{equation}
where $H(D)$ stands for the Hilbert transform, i.e. the Fourier multiplier operator of order $0$ with symbol $-i \textrm{sign}\xi:$
\begin{equation} \label{hilbert}
H(D)u(x)= \frac{1}{\pi} \textrm{P.V.} \int_{\R} \frac{u(y)}{x-y} \, dy = \int_{\R} e^{ix\xi}(-i \textrm{sign}\xi)\hat{u}(\xi) \dslash \xi.
\end{equation}
There exists a large number of papers dealing with existence, uniqueness and time asymptotics for the initial value problem related to the equation \eqref{tBO} and its generalizations in various functional settings, see for instance \cite{BL, BP, FP, LPP, MST, RS, T}. Concerning the solitary waves $u(x-ct), c>0,$ they satisfy the nonlocal elliptic equation \eqref{BO}
which corresponds to \eqref{equation} for $p(\xi)=|\xi|+c$ and $F(u)=u^2.$
In \cite{B1} Benjamin found the solution \begin{equation}
\label{solBO}
u(x)=\frac{2c}{1+c^2x^2}, \quad x \in \R,
\end{equation}
(see \eqref{2.15} in Remark \ref{rem2.3} for the easy computation). Later, Amick and Toland \cite{AT} proved that, apart from translations, the function \eqref{solBO} is the only solution of \eqref{BO} which tends to $0$ for $|x| \rightarrow \infty.$ Notice that $u(x)$ in \eqref{solBO} exhibits a quadratic decay at infinity like $|x|^{-2},$ satisfying \eqref{decay2} with $m=1, n=1.$\\

In the following we give a similar example with $m=1, n=1$,  which is not related to applicative problems. It confirms the optimality of the results stated in Theorem \ref{thm1} and testifies  that the polynomial terms $p_{m_j}(\xi)$ in the expression of $p(\xi)$ have no influence on the rate of decay.
 
\begin{theorem} \label{thmexamples} In dimension $n=1$, the equation \begin{equation}
\label{ex1}
D^2u+3|D|u+3u=8u^3
\end{equation}
admits the solution 
\begin{equation} \label{solex1}
u(x)=\frac{1}{1+x^2}, \qquad x \in \R.
\end{equation}
(In \eqref{ex1} we mean $D^{2}u=-u''$ and $|D|$ is as before the Fourier multiplier with symbol $|\xi|, \xi \in \R$).
\end{theorem}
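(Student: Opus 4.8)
The plan is to verify \eqref{solex1} by a direct computation, exploiting the fact that $u(x)=(1+x^2)^{-1}$ is, up to a scalar multiple, precisely Benjamin's profile \eqref{solBO} with $c=1$. First I would recall from the Benjamin--Ono analysis (see \eqref{2.15} in Remark \ref{rem2.3}) that $|D|u=(|D|+1)u-u$ and that $u=(1+x^2)^{-1}$ solves $(|D|+1)u=u^2$, i.e.\ $|D|u=u^2-u$. The key elementary fact is the Fourier transform identity $\widehat{(1+x^2)^{-1}}(\xi)=\pi e^{-|\xi|}$, from which $\widehat{|D|u}(\xi)=|\xi|\,\pi e^{-|\xi|}$; comparing with $\widehat{u^2-u}$ gives the relation $|D|u=u^2-u$ pointwise. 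This is the only ``nonlocal'' computation needed.

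Next I would handle the purely local part. Since $D^2u=-u''$ with $u=(1+x^2)^{-1}$, a direct differentiation gives
\begin{equation}\label{uprimeprime}
u''(x)=\frac{6x^2-2}{(1+x^2)^3}=\frac{8}{(1+x^2)^3}-\frac{6}{(1+x^2)^2}=8u^3-6u^2,
\end{equation}
so $D^2u=-u''=6u^2-8u^3$. Then I would assemble
\begin{equation}\label{assemble}
D^2u+3|D|u+3u=(6u^2-8u^3)+3(u^2-u)+3u=9u^2-8u^3,
\end{equation}
which is not yet $8u^3$. This signals that the naive identification is off, so the real work is to pin down the correct constant multiple: set $u(x)=\lambda(1+x^2)^{-1}$ and determine $\lambda$. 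Writing $w=(1+x^2)^{-1}$, one has $|D|w=w^2-w$ and $-w''=8w^3-6w^2$ as above, hence for $u=\lambda w$,
\begin{equation}\label{scaled}
D^2u+3|D|u+3u=\lambda(-w''+3|D|w+3w)=\lambda(8w^3-6w^2+3w^2-3w+3w)=\lambda(8w^3-3w^2).
\end{equation}
We need this to equal $8u^3=8\lambda^3 w^3$, i.e.\ $8\lambda w^3-3w^2=8\lambda^3w^3$ for all $x$; but the $w^2$ term cannot be absorbed, so in fact $u=(1+x^2)^{-1}$ itself ($\lambda=1$) must be checked against $8u^3$ directly, and the statement as written is what we must confirm — meaning I should recompute \eqref{assemble} carefully, since the claimed answer is $8u^3$ and the discrepancy must come from a sign or coefficient slip in the nonlocal term.

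The cleanest route, and the one I would ultimately follow, is to avoid reusing the Benjamin--Ono relation and instead compute all three terms of \eqref{ex1} from scratch on the Fourier side. With $\hat u(\xi)=\pi e^{-|\xi|}$, the left-hand side of \eqref{ex1} has Fourier transform $(\xi^2+3|\xi|+3)\pi e^{-|\xi|}$; the right-hand side $8u^3$ has a transform one can obtain by convolving $\hat u$ three times, or more simply by noting that $u^2=(1+x^2)^{-2}$ has transform $\tfrac{\pi}{2}(1+|\xi|)e^{-|\xi|}$ and that $u^3$ can be expressed through $u,u'',$ etc. Concretely, from \eqref{uprimeprime} we get $8u^3=u''+6u^2=-D^2u+6u^2$, so \eqref{ex1} becomes $D^2u+3|D|u+3u=-D^2u+6u^2$, i.e.\ $2D^2u+3|D|u+3u=6u^2$, i.e.\ $|D|u=2u^2-u-\tfrac{2}{3}D^2u$; substituting $|D|u=u^2-u$ and $D^2u=6u^2-8u^3$ reduces this to the identity $u^2-u=2u^2-u-\tfrac{2}{3}(6u^2-8u^3)$, i.e.\ $u^2=2u^2-4u^2+\tfrac{16}{3}u^3$, which again forces a specific relation. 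The main obstacle, then, is purely bookkeeping: reconciling the stated coefficients with the Fourier-transform computation, and presenting the verification so that the identity $(\xi^2+3|\xi|+3)\hat u(\xi)=\widehat{8u^3}(\xi)$ holds term by term. Once the correct constants are in hand, the proof is a one-line substitution; no functional-analytic input beyond Theorem \ref{thm1} (which guarantees \eqref{solex1} has the predicted $|x|^{-2}$ decay with $m=1,n=1$) is required.
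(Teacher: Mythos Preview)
Your strategy is sound and in fact more elementary than the paper's (which routes through the Bessel identity $\mathcal{F}((1+x^2)^{-\lambda})=\frac{2\sqrt{\pi}}{\Gamma(\lambda)}(|\xi|/2)^{\lambda-1/2}K_{\lambda-1/2}(|\xi|)$ and the recursion for $K_{5/2}$). But the execution contains two concrete errors that create the ``discrepancy'' you spend the rest of the proposal chasing, and as written the proof is never completed.

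\medskip
\textbf{Error 1.} The Benjamin--Ono profile \eqref{solBO} with $c=1$ is $2/(1+x^2)$, not $1/(1+x^2)$. With $w=(1+x^2)^{-1}$ the relation $(|D|+1)(2w)=(2w)^2$ gives $|D|w=2w^2-w$, not $|D|w=w^2-w$. You can check this directly: $\widehat w(\xi)=\pi e^{-|\xi|}$ and $\widehat{w^2}(\xi)=\tfrac{\pi}{2}(1+|\xi|)e^{-|\xi|}$ (which you in fact quote), so $\widehat{2w^2-w}=\pi|\xi|e^{-|\xi|}=\widehat{|D|w}$, while $\widehat{w^2-w}=\tfrac{\pi}{2}(|\xi|-1)e^{-|\xi|}$.

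\medskip
\textbf{Error 2.} In \eqref{uprimeprime} the algebra is off by a sign: $6x^2-2=6(1+x^2)-8$, so
\[
u''=\frac{6x^2-2}{(1+x^2)^3}=6u^2-8u^3,\qquad D^2u=-u''=8u^3-6u^2,
\]
not $D^2u=6u^2-8u^3$ (evaluate at $x=0$ to see your version gives $u''(0)=2$ instead of $-2$).

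\medskip
With both corrections the assembly is immediate:
\[
D^2u+3|D|u+3u=(8u^3-6u^2)+3(2u^2-u)+3u=8u^3,
\]
and you are done. The scaling analysis with $\lambda$, the reformulation $2D^2u+3|D|u+3u=6u^2$, and the closing paragraph are all artifacts of the two slips and should be dropped.
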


Note that the linear part of \eqref{ex1} is globally elliptic, i.e. \eqref{ell} is satisfied. Moreover, the order is $M=2$ and the singularity index is $m=1.$

\begin{proof}
We shall check that the Fourier transforms of the left and right-hand sides of \eqref{ex1} coincide for $u(x)$ as in \eqref{solex1}.
To this end we recall (see for example \cite{Sc}, formula (VII, 7;23), page 260, or \cite{GS}, formula (9), page 187) that
\begin{equation}
\label{bessel}
\mathcal{F}((1+x^2)^{-\lambda})=\frac{2\sqrt{\pi}}{\Gamma(\lambda)}\left( \frac{|\xi|}{2}\right)^{\lambda-\frac{1}{2}}K_{\lambda-\frac{1}{2}}(|\xi|),
\end{equation}
where $x,\xi \in \R^n$ and $\Gamma$ denotes the standard Euler function; arguing in the distribution sense, we may allow any $\lambda>0$. The functions $K_{\nu}(x), \nu \in \R, x \in \R \setminus 0$, are the modified Bessel functions of second type; for definitions and properties see for example \cite{EM}, \cite{Wa}.
We recall in particular that
\begin{equation}
\label{prop1}
K_{\nu}(x)=K_{-\nu}(x), \qquad \nu \in \R, x \neq 0,
\end{equation}
\begin{equation}\label{prop2}
K_{\nu+1}(x)=\frac{2\nu}{x}K_{\nu}(x)+K_{\nu-1}(x), \qquad \nu \in \R, x \neq 0.
\end{equation}
From \eqref{prop1}, \eqref{prop2}, we have
\begin{equation}
\label{prop4}
K_{\frac{3}{2}}(x)=\left( \frac{1}{x}+1\right)K_{\frac{1}{2}}(x), \qquad x\neq 0,
\end{equation}
\begin{equation}
\label{prop5}
K_{\frac{5}{2}}(x)=\left( \frac{3}{x^2}+\frac{3}{x}+1\right)K_{\frac{1}{2}}(x), \qquad x\neq 0.
\end{equation}
Let us then prove that \eqref{solex1} is a solution of \eqref{ex1}. In fact, from \eqref{bessel} and \eqref{prop5} we have
\begin{multline*}
8\mathcal{F}(u^3)=8\mathcal{F}((1+x^2)^{-3})\\= 8\sqrt{\pi}\left(\frac{|\xi|}{2}\right)^{5/2}K_{\frac{5}{2}}(|\xi|) 
= 2\sqrt{\pi}(\xi^2+3|\xi|+3)\left(\frac{|\xi|}{2}\right)^{1/2}K_{\frac{1}{2}}(|\xi|) \\
=(\xi^2+3|\xi|+3)\mathcal{F}((1+x^2)^{-1})=\mathcal{F}(D^2u+3|D|u+3u). \end{multline*}
\end{proof}

\begin{remark}
\label{rem2.3} The method used in the proof of Theorem \ref{thmexamples} can also be applied to the Benjamin-Ono equation and allows to give an easy alternative proof that the function $u(x)$ in \eqref{solBO} is a solution of \eqref{BO}, say for $c=1.$ In fact, from \eqref{bessel}
and \eqref{prop4} we easily obtain
\begin{multline} \label{2.15}
\mathcal{F}(u^2)=4\mathcal{F}((1+x^2)^{-2})= 8\sqrt{\pi}\left(\frac{|\xi|}{2}\right)^{3/2}K_{\frac{3}{2}}(|\xi|) \\
=4\sqrt{\pi}(|\xi|+1)\left(\frac{|\xi|}{2}\right)^{1/2}K_{\frac{1}{2}}(x) = 2(|\xi|+1) \mathcal{F}((1+x^2)^{-1}) \\ = \mathcal{F}(|D|u+u).
\end{multline}
Note that by \eqref{prop2} we may calculate inductively $K_{N/2}(x)$ for any odd integer $N$ in terms of $K_{1/2}(x)$. This allows to produce other similar examples, with higher order $M$, with $m=1$ and higher order nonlinearity. Solutions are still of the form $u(x)=\frac{1}{1+x^2}.$
\end{remark}

\end{section}

\begin{section}{Commutator identities and estimates}
In this section we  prove some commutator identities for Fourier multipliers which will be used in the proof of our result.
We first state a simple but crucial assertion on the compensation of the singularities at $\xi=0$ for homogeneous symbols.

\begin{lemma}\label{homo}
Let $p(\xi)$ be of the form \eqref{linop1} satisfying \eqref{ell} and let $m$ be defined by \eqref{linop1b}. Then the following estimates hold:
\begin{equation}
\sup_{\xi\in \R^n}\frac{|D_\xi^{\sigma}(\xi^{\tg} D_\xi^\gamma p(\xi))|}{|p(\xi)|}  < +\infty, \quad \gamma, \tg,\sigma \in \N^n, |\gamma|= |\tg|,|\sigma|\leq [m]. \label{linop3}\end{equation}
\end{lemma}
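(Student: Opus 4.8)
The plan is to reduce the estimate \eqref{linop3} to a finite check on each homogeneous piece $p_{m_j}(\xi)$ near the origin, since away from the origin everything is smooth and the ellipticity \eqref{ell} gives a lower bound $|p(\xi)|\gtrsim \pxi^M\gtrsim 1$ while the numerator is bounded on compact sets and grows at most polynomially — so the supremum over $|\xi|\geq 1$ is automatically finite for every $\gamma,\tg,\si$. The only issue is the behaviour as $\xi\to 0$, where $|p(\xi)|\to|p_0|>0$ is bounded below, so it suffices to show that the numerator $|D_\xi^{\si}(\xi^{\tg}D_\xi^{\gamma}p(\xi))|$ stays bounded as $\xi\to 0$ when $|\gamma|=|\tg|$ and $|\si|\leq[m]$.

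First I would split $p=p_0+\sum_j p_{m_j}$ and treat each $p_{m_j}$ separately; the constant $p_0$ contributes nothing once $|\gamma|\geq 1$ (and for $\gamma=0$ the expression $D_\xi^\si(p_0)=0$ unless $\si=0$, which is trivial), and the polynomial terms are $C^\infty$ everywhere so they too are harmless. For a genuinely homogeneous term $q:=p_{m_j}$ of degree $m_j$, Euler's relations give that $D_\xi^\gamma q$ is homogeneous of degree $m_j-|\gamma|$ on $\R^n\setminus 0$; multiplying by $\xi^{\tg}$ with $|\tg|=|\gamma|$ yields a function homogeneous of degree $m_j$, and applying $D_\xi^\si$ lowers this to degree $m_j-|\si|$. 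Since $m_j\geq m$ for every non-polynomial term by the definition \eqref{linop1b} of the singularity index, and $|\si|\leq[m]\leq m$, the resulting homogeneity degree $m_j-|\si|$ is $\geq m-[m]\geq 0$. A function homogeneous of nonnegative degree and smooth on the sphere is bounded near the origin (it is $O(|\xi|^{m_j-|\si|})=O(1)$), which is exactly the bound needed.

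The one point requiring a little care — and the place I expect the "main obstacle," though it is more bookkeeping than a real difficulty — is justifying that $\xi^{\tg}D_\xi^\gamma q$, which a priori is only smooth away from $0$, is differentiable $[m]$ times across $\xi=0$ in the classical sense (so that $D_\xi^\si$ of it is well-defined pointwise rather than just in $\mathcal{S}'$). This follows because a function on $\R^n$ that is homogeneous of degree $r$ and $C^\infty$ on $\R^n\setminus 0$ lies in $C^{[r]}(\R^n)$ when $r>0$ is not an integer, and more generally its derivatives up to order $\lfloor r\rfloor$ are continuous; here $\xi^{\tg}D_\xi^\gamma q$ is homogeneous of degree $m_j\geq m>[m]\geq|\si|$ wait — one must instead argue termwise on the derivatives, noting each $D_\xi^\si(\xi^{\tg}D_\xi^\gamma q)$ computed on $\R^n\setminus 0$ extends continuously to $0$ precisely because its homogeneity degree $m_j-|\si|$ is strictly positive when $|\si|<m_j$ and nonnegative (hence at worst bounded) when $|\si|=[m]=m_j$ can only happen if $m$ is an integer equal to some $m_j$, a borderline case where the degree-zero homogeneous function is merely bounded, not continuous, at $0$ — but boundedness is all \eqref{linop3} asks for. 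Assembling these observations, taking the maximum of the finitely many resulting constants over $|\gamma|=|\tg|$ and $|\si|\leq[m]$ and over $j=1,\dots,h$, and combining with the trivial bound on $\{|\xi|\geq 1\}$, gives \eqref{linop3}. \hfill $\fim$
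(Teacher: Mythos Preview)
Your argument is correct and follows essentially the same route as the paper's proof: you identify that each term $D_\xi^{\sigma}(\xi^{\tg}D_\xi^{\gamma}p_{m_j})$ is homogeneous of degree $m_j-|\sigma|$, use $m_j\leq M$ for the bound at infinity and $m_j-|\sigma|\geq m-[m]\geq 0$ for boundedness near the origin, and then invoke \eqref{ell}. The paper compresses this into a single estimate $|D_\xi^{\sigma}(\xi^{\tg}D_\xi^{\gamma}p)|\leq C\pxi^M$ rather than splitting into $|\xi|\leq 1$ and $|\xi|\geq 1$, but the content is identical; your extra discussion of pointwise differentiability across $\xi=0$ is exactly the issue the paper addresses separately in Remark~\ref{rem3.2}.
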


\begin{proof}
Since $|\tg|=|\gamma|$, then $D_\xi^{\sigma}(\xi^{\tg} D_\xi^\gamma p(\xi))$ is a sum of terms with homogeneity of order $m_j-|\sigma|.$ Since $m_j \leq M$, we have $m_j-|\sigma|\leq M.$ Moreover, in the non-polynomial case, in view of the assumptions $|\sigma|\leq [m], m_j \geq m,$ we have $m_j-|\sigma| \geq m-[m] \geq 0.$ Therefore, for some $C>0$ we have
$$|D_\xi^{\sigma}(\xi^{\tg} D_\xi^\gamma p(\xi))| \leq C \pxi^M, \qquad \xi \in \R^n.$$ Hence \eqref{linop3} follows from \eqref{ell}.
\end{proof}

\begin{remark} \label{rem3.2}
In Lemma \ref{homo}, and often in the sequel, we consider higher order derivatives of the non-polynomial terms $D_{\xi}^{\gamma}p_{m_{j}}(\xi).$ These derivatives should be performed in the distribution sense, possibly producing $\delta$ distribution or its derivatives at the origin. However, in all the expressions, multiplication by monomials $\xi^{\alpha}$ appears as well, so that in the whole we shall always obtain a distribution $h \in \mathcal{S}'(\R^{n})$ homogeneous of order larger than $-n$. Then $\delta$ contributions are cancelled. Strictly speaking: the distribution $h \in \mathcal{S}'(\R^{n})$ can be identified in this case with the function $h_{|_{\R^{n} \setminus 0}} \in C^{\infty}(\R^{n}\setminus 0) \cap L^{1}_{\textrm{loc}}(\R^{n}).$ Let us refer, for example, to \cite{GS}, Chapter 1, Section 3.11, for a detailed explanation. Summing up, in Lemma \ref{homo} and in the sequel we may limit ourselves to argue in classical terms, i.e. on the pointwise definition of derivatives.
\end{remark}

\begin{proposition} \label{commid}
Let $p(D)$ be a Fourier multiplier defined by a homogeneous symbol $p(\xi)$ of order $m \geq 0$ and let $\alpha, \beta \in \N^n$ with $|\beta|\leq |\alpha|.$ Then, for every $u \in \mathcal{S}(\R^n)$ the following identity holds:  
\begin{multline}
x^{\beta}p(D)D^{\alpha}u  = p(D)(x^\beta  D^\alpha u ) \\+ \sum_{0\neq \gamma \leq \beta}\sum_{\stackrel{\ta, \tb}{|\tb|\leq|\ta|<|\alpha|}}C_{\alpha \beta \gamma \ta \tb \tg} D^{\tg}\circ (D_\xi^{\gamma} p)(D)(x^{\tb}D_x^{\ta}u),
\label{puffo}
\end{multline}
where for every $\gamma$ in the sums above, $\tg$ denotes a multi-index depending on $\alpha, \beta, \ta, \tb, \gamma$ and satisfying the condition $|\tg|=|\gamma|$, and $C_{\alpha \beta \gamma \ta \tb \tg}$ are suitable constants. 
\end{proposition}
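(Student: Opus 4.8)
The plan is to pass to the Fourier transform side, where multiplication by $x_j$ becomes (up to a sign) the derivative $D_{\xi_j}$, and to read \eqref{puffo} as a regrouped Leibniz expansion. Since $u\in\mathcal{S}(\R^n)$ we have $\hat u\in\mathcal{S}(\R^n)$; setting $g(\xi):=\xi^{\al}\hat u(\xi)$, the Fourier transform of $x^{\be}p(D)D^{\al}u$ equals $(-1)^{|\be|}D_\xi^{\be}(p(\xi)g(\xi))$ and that of $p(D)(x^{\be}D^{\al}u)$ equals $(-1)^{|\be|}p(\xi)D_\xi^{\be}g(\xi)$. Thus \eqref{puffo} amounts to expressing the defect
\begin{equation*}
D_\xi^{\be}(p\,g)-p\,D_\xi^{\be}g=\sum_{0\neq\ga\leq\be}\binom{\be}{\ga}(D_\xi^{\ga}p)(D_\xi^{\be-\ga}g)
\end{equation*}
(the $\ga=0$ term of Leibniz having cancelled) in the stated form. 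Keeping $\xi^{\al}$ inside the factor $g$, rather than differentiating it on its own, is the key organisational point: it is precisely why every correction term carries at least one genuine $\xi$-derivative of $p$, and hence why the outer sum in \eqref{puffo} runs over $\ga\neq0$.

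I would then expand $D_\xi^{\be-\ga}g=D_\xi^{\be-\ga}(\xi^{\al}\hat u)$ once more by Leibniz into a finite sum, over $\mu\leq\min(\be-\ga,\al)$, of terms $c_{\al\mu}\,\xi^{\al-\mu}D_\xi^{\be-\ga-\mu}\hat u$, and put $\nu:=\be-\ga-\mu$. Because $|\ga|+|\mu|\leq|\ga|+|\mu|+|\nu|=|\be|\leq|\al|$, there is a multi-index $\tg\leq\al-\mu$ with $|\tg|=|\ga|$; in fact $\tg$ is forced by the remaining data, $\tg=\al-\be+\ga+\tb-\ta$ with $\ta,\tb$ as below. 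Splitting off $\xi^{\tg}$ and re-expanding by Leibniz in the reverse direction, $\xi^{\al-\mu}D_\xi^{\nu}\hat u=\xi^{\tg}\sum_{\delta}c'_{\delta}\,D_\xi^{\nu-\delta}(\xi^{\al-\mu-\tg-\delta}\hat u)$ with $\delta\leq\min(\nu,\al-\mu-\tg)$, and $D_\xi^{\nu-\delta}(\xi^{\al-\mu-\tg-\delta}\hat u)$ is $(-1)^{|\nu-\delta|}$ times the Fourier transform of $x^{\nu-\delta}D^{\al-\mu-\tg-\delta}u$. Setting $\tb:=\nu-\delta$ and $\ta:=\al-\mu-\tg-\delta$ one verifies: $|\tg|=|\ga|$ by construction; $|\tb|\leq|\ta|$, since $|\tb|-|\ta|=|\nu|-(|\al|-|\mu|-|\ga|)=|\be|-|\al|\leq0$; and $|\ta|=|\al|-|\ga|-|\mu|-|\delta|\leq|\al|-|\ga|<|\al|$ as $\ga\neq0$. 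Taking inverse Fourier transforms term by term, $(D_\xi^{\ga}p)(\xi)\,\xi^{\tg}\,\widehat{x^{\tb}D^{\ta}u}(\xi)$ returns $D^{\tg}\circ(D_\xi^{\ga}p)(D)(x^{\tb}D^{\ta}u)$; finally I would collect the finitely many scalars (the binomial coefficients, the constants $c_{\al\mu},c'_{\delta}$, the signs, and the sum over all pairs $(\mu,\delta)$ producing the same triple $(\ta,\tb,\tg)$) into a single constant $C_{\al\be\ga\ta\tb\tg}$, which gives \eqref{puffo}.

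The routine Leibniz manipulations aside, the one genuinely delicate point is the multi-index bookkeeping of the previous paragraph: extracting $\tg$ with $|\tg|=|\ga|$ out of the available powers of $\xi$ while at the same time keeping $|\tb|\leq|\ta|<|\al|$. This is exactly where the hypothesis $|\be|\leq|\al|$ is used — it guarantees both the existence of a suitable $\tg\leq\al-\mu$ and the inequality $|\tb|\leq|\ta|$. A secondary subtlety is that for large $|\ga|$ the symbol $D_\xi^{\ga}p$, read in the distributional sense, may fail to be locally integrable; but in every term it appears multiplied by $\xi^{\tg}$ with $|\tg|=|\ga|$, so the symbol $\xi^{\tg}D_\xi^{\ga}p(\xi)$ of the composite operator $D^{\tg}\circ(D_\xi^{\ga}p)(D)$ is homogeneous of order $m\geq0>-n$, hence a polynomially bounded, locally integrable function with no $\delta$-contributions (cf. Remark \ref{rem3.2}); applied to the Schwartz function $x^{\tb}D^{\ta}u$ it is unambiguously defined, so all the identities above are legitimate equalities of continuous functions. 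An entirely equivalent alternative is induction on $|\be|$, built on the single-variable commutator $[x_j,p(D)]=-(D_{\xi_j}p)(D)$ and the relation $x^{\be}D_jw=D_j(x^{\be}w)+i\be_j x^{\be-e_j}w$, which lets one peel off one factor of $x$ at a time; the same bookkeeping, and the same role of $|\be|\leq|\al|$, resurface there.
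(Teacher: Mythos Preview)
Your argument is correct and is essentially the paper's own proof transported to the Fourier side: the paper writes $x^{\beta}=\sum_{\gamma}\binom{\beta}{\gamma}(x-y)^{\gamma}y^{\beta-\gamma}$ inside the oscillatory integral and integrates by parts in $\xi$ and $y$, which is precisely your two Leibniz expansions of $D_\xi^{\beta}(p\,\xi^{\alpha}\hat u)$; the choice of $\tg\leq\alpha-\mu$ with $|\tg|=|\gamma|$ and the subsequent ``reverse Leibniz'' correspond exactly to the paper's splitting $\xi^{\alpha-\delta}=\xi^{\tg}\xi^{\alpha-\delta-\tg}$ followed by a further integration by parts in $y$. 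The resulting indices, constraints, and the role of $|\beta|\le|\alpha|$ coincide term by term.
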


\begin{proof}
We can write 
\begin{multline*}
x^{\beta}p(D)D^{\alpha}u = \sum_{\gamma \leq \beta} \binom{\beta}{\gamma} \int_{\R^n}\int_{\R^n}e^{i(x-y)\xi} (x-y)^{\gamma}p(\xi)y^{\beta-\gamma}D_y^{\alpha}u(y)dy \db \xi \\
= p(D)(x^\beta  D^\alpha u ) +  \sum_{0\neq \gamma \leq \beta} \binom{\beta}{\gamma} \int_{\R^n}\int_{\R^n}D_{\xi}^{\gamma}(e^{i(x-y)\xi}) p(\xi)y^{\beta-\gamma}D_y^{\alpha}u(y)dy \db \xi, \end{multline*}
Integration by parts with respect to $y$ and $\xi$ gives 
\begin{multline*} \int_{\R^n}\int_{\R^n}D_{\xi}^{\gamma}(e^{i(x-y)\xi}) p(\xi)y^{\beta-\gamma}D_y^{\alpha}u(y)dy \db \xi \\
= \int_{\R^n}\int_{\R^n}D_{\xi}^{\gamma}(-D_y)^{\alpha}(e^{i(x-y)\xi}y^{\beta-\gamma}) p(\xi)u(y)dy \db \xi \\
= \sum_{\stackrel{\delta \leq \alpha}{\delta\leq \beta-\gamma}} (-1)^{|\gamma|}(-i)^{|\delta|} \binom{\alpha}{\delta} \frac{(\beta-\gamma)!}{(\beta-\gamma-\delta)!}
\int_{\R^n}\int_{\R^n} e^{i(x-y)\xi}\xi^{\alpha-\delta}(D_{\xi}^{\gamma}p)(\xi)y^{\beta-\gamma-\delta}u(y)dy\db \xi.\end{multline*}
Let now $\tg$ be a multi-index such that $\tg \leq \alpha-\delta$ and $|\tg|=|\gamma|.$ Such a multi-index exists since $|\gamma| \leq |\beta-\delta|\leq |\alpha-\delta|$ in the sums above. Then, write $$e^{i(x-y)\xi}\xi^{\alpha-\delta}=\xi^{\tg}(-D_y)^{\alpha-\delta-\tg}e^{i(x-y)\xi}$$ and integrate by parts again with respect to $y$. We obtain
\begin{multline*}
 \int_{\R^n}\int_{\R^n} e^{i(x-y)\xi}\xi^{\alpha-\delta}(D_{\xi}^{\gamma}p)(\xi)y^{\beta-\gamma-\delta}u(y)dy\db \xi \\
=\int_{\R^n}\int_{\R^n} e^{i(x-y)\xi}\xi^{\tg}(D_{\xi}^{\gamma}p)(\xi)D_y^{\alpha-\delta-\tg}(y^{\beta-\gamma-\delta}u(y))dy\db \xi \\
=\sum_{\stackrel{\theta \leq \alpha-\delta-\tg}{\theta \leq \beta-\gamma-\delta}}(-i)^{|\theta|}\binom{\alpha-\delta-\tg}{\theta} \frac{(\beta-\gamma-\delta)!}{(\beta-\gamma-\delta-\theta)!} \times \\ \times \int_{\R^n}\int_{\R^n} e^{i(x-y)\xi}\xi^{\tg}(D_{\xi}^{\gamma}p)(\xi)y^{\beta-\gamma-\delta-\theta}D_y^{\alpha-\tg-\delta-\theta}u(y)dy\db \xi,
\end{multline*}
which gives \eqref{puffo}. 
\end{proof}

\begin{proposition}
\label{commid2} 
Let $q(D)$ be a Fourier multiplier defined by a homogeneous symbol $q(\xi)$ of order $m>0.$ Then, for every $\rho \in \N^n$ with $|\rho|<m+n$ and for every $v \in \mathcal{S}(\R^n)$ the following identity holds:
\begin{equation}
x^\rho q(D) v = q(D)(x^{\rho}v)+ 
\sum_{0\neq \sigma \leq \rho} \binom{\rho}{\sigma}(-1)^{|\sigma|}(D_{\xi}^\sigma q)(D)(x^{\rho-\sigma} v). \label{multcomm} \end{equation}
\end{proposition}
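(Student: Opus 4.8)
The plan is to verify \eqref{multcomm} by a direct computation on the Fourier transform side, in the same spirit as the proof of Proposition \ref{commid}; the only genuinely new ingredient is the bookkeeping of the singularity of $q$ at $\xi=0$, which is exactly what forces the restriction $|\rho|<m+n$.

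First I would start from the absolutely convergent representation
$$q(D)v(x) = \int_{\R^n} e^{ix\xi}\, q(\xi)\,\hat v(\xi)\,\db\xi,$$
which is legitimate because $q$ is homogeneous of order $m>0$, hence bounded near the origin and of polynomial growth at infinity, while $\hat v\in\mathcal{S}(\R^n)$. Multiplying by $x^\rho$ and using $x^\rho e^{ix\xi}=D_\xi^\rho e^{ix\xi}$, I would transfer the $\rho$ derivatives onto $q\hat v$ by integrating by parts in $\xi$, obtaining
$$x^\rho q(D)v(x) = (-1)^{|\rho|}\int_{\R^n} e^{ix\xi}\, D_\xi^\rho\bigl(q(\xi)\hat v(\xi)\bigr)\,\db\xi,$$
and then apply the Leibniz rule $D_\xi^\rho(q\hat v)=\sum_{\sigma\le\rho}\binom{\rho}{\sigma}(D_\xi^\sigma q)(D_\xi^{\rho-\sigma}\hat v)$. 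Finally I would use the elementary identity $D_\xi^{\rho-\sigma}\hat v(\xi)=(-1)^{|\rho-\sigma|}\widehat{x^{\rho-\sigma}v}(\xi)$, so that the signs combine to $(-1)^{|\sigma|}$, reassemble each term of the sum as $(D_\xi^\sigma q)(D)(x^{\rho-\sigma}v)$, and isolate the contribution $\sigma=0$, which is precisely $q(D)(x^\rho v)$. This produces \eqref{multcomm}.

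The step that requires care — and the sole reason for the hypothesis $|\rho|<m+n$ — is the integration by parts together with the Leibniz expansion above, since $q$ is in general only smooth off the origin and the $D_\xi^\sigma q$ a priori involve distributional derivatives. Here I would invoke Remark \ref{rem3.2}: for every $\sigma\le\rho$ the derivative $D_\xi^\sigma q$ is homogeneous of order $m-|\sigma|\ge m-|\rho|>-n$, hence coincides with a locally integrable function on $\R^n$ (its pointwise derivative off the origin), so that no $\delta$-type contribution at $\xi=0$ appears at any stage of the integration by parts; and since $\hat v$ and all its derivatives are rapidly decreasing while $D_\xi^\sigma q$ has at most polynomial growth, there is no boundary contribution at infinity either. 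For the same reason each expression $(D_\xi^\sigma q)(D)(x^{\rho-\sigma}v)$ in the right-hand side of \eqref{multcomm} is well defined, since $x^{\rho-\sigma}v\in\mathcal{S}(\R^n)$ and $D_\xi^\sigma q\in\mathcal{S}'(\R^n)$ acts on its Fourier transform, the product $(D_\xi^\sigma q)\,\widehat{x^{\rho-\sigma}v}$ being in $L^1(\R^n)$.

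Equivalently, the whole argument can be stated intrinsically on the Fourier side: $\mathcal{F}(x^\rho q(D)v)=(-1)^{|\rho|}D_\xi^\rho(q\,\hat v)$ as tempered distributions, one expands by Leibniz (valid since $\hat v\in C^\infty(\R^n)$) and takes inverse Fourier transforms term by term, the hypothesis $|\rho|<m+n$ entering at exactly the same place to guarantee that $D_\xi^\sigma q$ is a genuine function rather than a distribution with part of its support at the origin. I expect this identification of $D_\xi^\sigma q$ with its pointwise derivative to be the only delicate point; the remaining manipulations are routine multi-index combinatorics.
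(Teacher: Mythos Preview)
Your argument is correct and follows exactly the same route as the paper: write $x^\rho e^{ix\xi}=D_\xi^\rho e^{ix\xi}$, integrate by parts, expand by Leibniz, and identify the terms, invoking the fact that $|\sigma|\le|\rho|<m+n$ forces $D_\xi^\sigma q\in L^1_{\mathrm{loc}}(\R^n)$ so that no singular contribution at the origin arises. The paper's proof is simply a two-line version of what you wrote; your additional care with the sign bookkeeping and the appeal to Remark~\ref{rem3.2} is welcome but not a different method.
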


\begin{proof} Notice that the condition $|\sigma|<m+n$ and the homogeneity imply that $D_{\xi}^{\sigma}q(\xi) \in L^1_{\textrm{loc}}(\R^n)$. Then integrating by parts we have 
\begin{eqnarray*}
x^\rho q(D) v &=& \int_{\R^n} (D_{\xi}^{\rho}e^{ix\xi})q(\xi)\hat{v}(\xi)\db \xi \\ &=&
\sum_{\sigma \leq \rho}\binom{\rho}{\sigma}(-1)^{|\sigma|} \int_{\R^n}e^{ix\xi}D_{\xi}^{\sigma}q(\xi)D_{\xi}^{\rho-\sigma}\hat{v}(\xi)\db \xi
\end{eqnarray*}
from which \eqref{multcomm} follows.
\end{proof}

Fixed $s \in \R$, we shall denote by $H^s_1(\R^n)$ the space of all $u \in \mathcal{S}'(\R^n)$ such that $$\|u\|_{H^s_1}:= \|\pd^s u\|_{L^1}<\infty.$$
The next result states some useful estimates for singular operators, that is operators with symbol $q(\xi) \rightarrow \infty$ for $\xi \rightarrow 0.$

\begin{lemma} \label{homodist}
Let $q(\xi) \in C^{\infty}(\R^n\setminus 0)$ be a homogeneous symbol of order $\mu \in (-n/2,0)$ and let $\varphi \in C_o^{\infty}(\R^n)$ such that $\varphi(\xi)=1$ for $|\xi|\leq 1$. Consider the operator 
$$H_{\varphi, q}v(x):= ((\varphi q)(D)v)(x)= \int_{\R^n}\int_{\R^n}e^{i(x-y)\xi}\varphi(\xi)q(\xi)v(y)dy\db \xi, \qquad v \in \mathcal{S}(\R^n).$$ Then we have
\begin{equation}
\label{L2L1}
\|H_{\varphi, q}v\|_{H^s} \leq C_s \|v\|_{H^s_1}.
\end{equation}
\end{lemma}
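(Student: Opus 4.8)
The plan is to estimate $H_{\varphi,q}v$ by splitting the convolution kernel according to whether the frequency variable is near or far from the singularity of $q$ at $\xi=0$, and to exploit the fact that $\widehat{H_{\varphi,q}v}(\xi)=\varphi(\xi)q(\xi)\hat v(\xi)$. Since $\varphi$ is compactly supported, the symbol $\varphi(\xi)q(\xi)$ is globally integrable (here $\mu>-n$, indeed $\mu>-n/2$ suffices and more), so its inverse Fourier transform $k_q(z):=\int_{\R^n}e^{iz\xi}\varphi(\xi)q(\xi)\,\dslash\xi$ is a bounded continuous function. First I would reduce \eqref{L2L1} to the case $s=0$: writing $w=\pd^s v$, one has $\pd^s H_{\varphi,q}v = H_{\varphi,q}(\pd^s v)=H_{\varphi,q}w$ because $\pd^s$ and $(\varphi q)(D)$ are both Fourier multipliers and hence commute, so it is enough to prove $\|H_{\varphi,q}w\|_{L^2}\le C\|w\|_{L^1}$.

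The core estimate is then a Young-type bound. We have $H_{\varphi,q}w = k_q * w$, and by Young's inequality $\|k_q*w\|_{L^2}\le \|k_q\|_{L^2}\|w\|_{L^1}$; the whole point is to check that $k_q\in L^2(\R^n)$, and this is precisely where the hypothesis $\mu\in(-n/2,0)$ is used. For large $|z|$, repeated integration by parts (or the classical homogeneity/scaling argument for Fourier transforms of homogeneous distributions) shows that the contribution of the region $|\xi|\lesssim 1$, where $q$ is genuinely singular, produces a kernel decaying like $|z|^{-(n+\mu)}$ as $|z|\to\infty$ — integrable square at infinity exactly because $2(n+\mu)>n$, i.e. $\mu>-n/2$; the contribution of a smooth compactly supported piece (the region where $\varphi$ transitions) is Schwartz and causes no trouble. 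For small $|z|$, $k_q$ is bounded since $\varphi q\in L^1$, hence locally square-integrable. Combining the two ranges gives $k_q\in L^2$, and \eqref{L2L1} follows with $C_s=\|k_q\|_{L^2}$.

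An equivalent and perhaps cleaner route, which I would mention as an alternative, is to work on the Fourier side via a Hausdorff--Young / Plancherel argument: $\|H_{\varphi,q}w\|_{L^2}=\|\varphi q\,\hat w\|_{L^2}\le \|\varphi q\|_{L^2}\|\hat w\|_{L^\infty}\le\|\varphi q\|_{L^2}\|w\|_{L^1}$, and $\|\varphi q\|_{L^2}^2\le \int_{|\xi|\le R}|\xi|^{2\mu}\,d\xi<\infty$ precisely when $2\mu>-n$. This makes the role of the assumption $\mu>-n/2$ completely transparent and avoids kernel estimates altogether; the reduction to $s=0$ is the same. I would most likely present this second argument as the main proof and relegate the kernel computation to a remark.

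\textbf{Main obstacle.} There is no serious obstacle here; the only point requiring care is the justification that the operator is well defined on $\mathcal{S}(\R^n)$ and that the formal manipulations (commuting with $\pd^s$, interpreting $D_\xi^\sigma q$ and $\varphi q$ as locally integrable functions rather than general distributions) are legitimate, which is exactly the content of Remark \ref{rem3.2} and the standard theory of homogeneous distributions; once $\varphi q\in L^1\cap L^2$ is recorded, the estimate is immediate from Plancherel and Hausdorff--Young.
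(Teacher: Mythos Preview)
Your proposal is correct. The paper's proof and your preferred ``Fourier-side'' argument are equivalent by Plancherel, but they reach the key $L^2$ bound by different routes. The paper writes $\varphi q = q - (1-\varphi)q$, invokes the fact that the inverse Fourier transform of the homogeneous distribution $q$ is homogeneous of degree $-n-\mu$, observes that $\mathcal{F}^{-1}((1-\varphi)q)$ is rapidly decreasing, and concludes $|\mathcal{F}^{-1}(\varphi q)(x)|\le C\langle x\rangle^{-n-\mu}\in L^2$; then Young gives $L^1\to L^2$. Your main argument bypasses the kernel entirely: $\|H_{\varphi,q}w\|_{L^2}=\|\varphi q\,\hat w\|_{L^2}\le\|\varphi q\|_{L^2}\|\hat w\|_{L^\infty}\le\|\varphi q\|_{L^2}\|w\|_{L^1}$, and $\varphi q\in L^2$ is the one-line computation $\int_{|\xi|\le R}|\xi|^{2\mu}\,d\xi<\infty$ for $\mu>-n/2$. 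This is more elementary and makes the threshold $\mu>-n/2$ transparent without appealing to the Fourier theory of homogeneous distributions. The paper's kernel estimate, on the other hand, gives slightly more information (the pointwise decay $\langle x\rangle^{-n-\mu}$ of the convolution kernel), which is what they reuse in the proof of the next lemma on commutators with $\langle x\rangle^r$; your approach would need to be supplemented there. Your reduction to $s=0$ via commutation of Fourier multipliers is fine and is implicit in the paper as well.
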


\begin{proof}
Observe that we can write $\varphi(\xi)q(\xi) = q(\xi)-(1-\varphi(\xi))q(\xi).$ Since $q(\xi)$ is a homogeneous distribution of order $\mu,$ then its inverse Fourier transform is a homogeneous distribution of order $-n-\mu.$ On the other hand, it is immediate to check that the inverse Fourier transform of $(1-\varphi(\xi))q(\xi)$ is rapidly decreasing. Then we have that 
$$|\mathcal{F}^{-1}_{\xi\rightarrow x}(\varphi(\xi)q(\xi))(x)| \leq C\px^{-n-\mu}.$$ Since $\mu >-n/2$, the estimate above implies that $\mathcal{F}^{-1}_{\xi\rightarrow x}(\varphi(\xi)q(\xi))(x) \in L^2(\R^n).$ Hence, writing $$H_{\varphi, q}v(x)= (\mathcal{F}^{-1}_{\xi\rightarrow x}(\varphi(\xi)q(\xi))\ast v)(x),$$ the estimate \eqref{L2L1} follows as a consequence of Young inequality.
\end{proof}

We address now the case of commutation with fractional powers.

\begin{lemma}\label{fractional}
Let $q(\xi)$ be a smooth positively homogeneous symbol of order $\mu$, let $r >0$ and $\varphi \in C_o^{\infty}(\R^n)$ such that $\varphi(\xi)=1$ for $|\xi|\leq 1.$ Then, if $\mu-r >-n/2$ then for every $v \in \mathcal{S}(\R^n)$ we have:
\begin{equation}\label{sob1}\|[\px^r, H_{\varphi, q}]v\|_{s} \leq C_s \| v\|_{H_1^s}.\end{equation} If moreover $\mu-r >0$, then 
\begin{equation}\label{sob2}\|[\px^r, H_{\varphi, q}]v\|_s \leq C_s \|v\|_s.\end{equation} \end{lemma}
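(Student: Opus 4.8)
The plan is to write the commutator $[\px^r, H_{\varphi,q}]$ as an operator whose kernel is controlled by the kernel of an operator of the type treated in Lemma \ref{homodist}, and then invoke that lemma (or the elementary convolution estimate behind it). Write $k(x)=\mathcal{F}^{-1}_{\xi\to x}(\varphi(\xi)q(\xi))(x)$, so that $H_{\varphi,q}v=k\ast v$. Then
$$[\px^r,H_{\varphi,q}]v(x)=\int_{\R^n}\bigl(\px^r-\langle y\rangle^r\bigr)k(x-y)v(y)\,dy,$$
and the whole point is the pointwise bound on the difference quotient $|\px^r-\langle y\rangle^r|$. The main obstacle — and really the only substantive computation — is to establish the inequality
$$\bigl|\px^r-\langle y\rangle^r\bigr|\leq C\bigl(\langle x-y\rangle^r+\langle x-y\rangle\,\langle y\rangle^{r-1}\bigr),$$
valid for all $r>0$; this follows by distinguishing $|x-y|\geq \tfrac12\langle y\rangle$ (where the left side is dominated by $C\langle x\rangle^r+C\langle y\rangle^r\leq C'\langle x-y\rangle^r$, using $\langle x\rangle\leq C\langle x-y\rangle\langle y\rangle$ and $\langle y\rangle\le 2\langle x-y\rangle$) from $|x-y|\leq\tfrac12\langle y\rangle$ (where $\langle x\rangle\sim\langle y\rangle$ and the mean value theorem applied to $t\mapsto t^r$ on $[\,|x|^2+1,|y|^2+1\,]$ gives $|\px^r-\langle y\rangle^r|\leq C|x-y|\,\langle y\rangle^{r-1}\le C\langle x-y\rangle\langle y\rangle^{r-1}$).

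Granting this, the commutator splits as a sum of two convolution-type operators. The first has kernel $\lesssim \langle x-y\rangle^r\,|k(x-y)|$: since $k$ is smooth away from the origin, homogeneous of order $-n-\mu$ near $0$, and rapidly decreasing at infinity (exactly as in the proof of Lemma \ref{homodist}, because $(1-\varphi)q$ has Schwartz inverse Fourier transform), we get $\langle x-y\rangle^r|k(x-y)|\leq C\px[x-y]^{\,r-n-\mu}$, wait—more precisely $|k(z)|\le C\langle z\rangle^{-n-\mu}$ for $|z|$ bounded and decays fast for $|z|$ large, so $\langle z\rangle^r|k(z)|\le C\langle z\rangle^{r-n-\mu}$ which lies in $L^2(\R^n)$ precisely when $n+\mu-r>n/2$, i.e. when $\mu-r>-n/2$; thus this piece maps $H^s_1\to H^s$ by Young's inequality, exactly as in Lemma \ref{homodist}. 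The second operator acts as $v\mapsto \langle y\rangle^{r-1}v$ followed by convolution against the kernel $\langle x-y\rangle\,|k(x-y)|$, which is $\le C\langle z\rangle^{1-n-\mu}\in L^2$ again under $\mu-r>-n/2$ (since $1-r<r$ here is not needed; one checks $n+\mu-1>n/2$ follows from $\mu-r>-n/2$ together with $r\ge 1$, and for $0<r<1$ one has $r-1<0$ so the weight $\langle y\rangle^{r-1}$ is bounded and the required integrability $n+\mu-1>n/2$ is weaker than $n+\mu-r>n/2$). In either regime one obtains $\|[\px^r,H_{\varphi,q}]v\|_s\le C_s\|\langle y\rangle^{r-1}v\|_{H^s_1}+C_s\|v\|_{H^s_1}\le C_s\|v\|_{H^s_1}$, which is \eqref{sob1}; here the first term is absorbed into $\|v\|_{H^s_1}$ once $r\le 1$, and for $r>1$ one iterates or simply notes $\langle y\rangle^{r-1}\le\langle y\rangle^r$ and reabsorbs by the same kernel estimate with the sharper exponent — I would rather, to keep things uniform, peel off one unit of weight at a time, reducing to the case $0<r\le 1$ by writing $\px^r=\px^{r-1}\cdot\px$ and using the already-established commutator bound inductively on $\lceil r\rceil$.

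For the second estimate \eqref{sob2}, under the stronger hypothesis $\mu-r>0$ the kernel $\langle x-y\rangle^r|k(x-y)|\le C\langle z\rangle^{r-n-\mu}$ with $r-n-\mu<-n$, hence it lies in $L^1(\R^n)$ (not merely $L^2$), and likewise $\langle z\rangle^{1-n-\mu}\in L^1$; Young's inequality in the form $L^1\ast L^2\subset L^2$ then gives $\|[\px^r,H_{\varphi,q}]v\|_s\le C_s\|v\|_s+C_s\|\langle y\rangle^{r-1}v\|_s$, and reducing as before to $0<r\le1$ makes the extra weight bounded, yielding \eqref{sob2}. The commutation of all these multiplier and weight operations with $\pd^s$ is harmless because $\pd^s$ commutes with convolution and the $H^s$-, $H^s_1$-norms are defined through $\pd^s$; strictly, one applies the whole argument to $\pd^s v$ in place of $v$, using that $\pd^s$ commutes with $\px^r$ up to lower-order terms — or, cleaner, one notes $H_{\varphi,q}$ and $\pd^s$ are both Fourier multipliers hence commute exactly, so $\pd^s[\px^r,H_{\varphi,q}]v=[\px^r,H_{\varphi,q}]\pd^s v+[\pd^s,\px^r]H_{\varphi,q}v$, wait this reintroduces a commutator; the honest route is to absorb $\pd^s$ from the start by replacing $k\ast v$ with $k\ast(\pd^s v)$ inside every displayed identity, which is legitimate since $\pd^s(k\ast v)=k\ast\pd^s v$. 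I expect the difference-quotient inequality for $\px^r$ to be the only real work; everything downstream is a repetition of the convolution/Young argument already carried out in Lemma \ref{homodist}.
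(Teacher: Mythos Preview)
Your approach is the paper's: represent $[\px^r,H_{\varphi,q}]$ as an integral operator with kernel $(\px^r-\langle y\rangle^r)\,k(x-y)$, estimate this via the decay of $k$ established in Lemma~\ref{homodist}, and invoke Young's inequality. The paper is terser---it simply asserts the pointwise bound $|K(x,y)|\le C\langle x-y\rangle^{-n-\mu+r}$ and reads off the $L^1\!\to\!L^2$ and $L^2\!\to\!L^2$ mapping properties. That kernel bound follows in one line for $0<r\le1$ from the elementary inequality $|\langle x\rangle^r-\langle y\rangle^r|\le|\langle x\rangle-\langle y\rangle|^r\le|x-y|^r$, and this range is in fact all the paper ever uses (in Lemma~\ref{moreregdec} one has $r=\ve_o+\ve_1<1$, and in Step~2 of the main proof $r=\tau<1$).

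Your two-term difference inequality is the right tool if one insists on general $r>0$, but there is a slip in how you handle the second piece for $0<r<1$: you assert that the integrability condition $n+\mu-1>n/2$ (needed for $\langle z\rangle\,|k(z)|\in L^2$) is \emph{weaker} than the hypothesis $n+\mu-r>n/2$. The direction is backwards: for $r<1$ one has $n+\mu-r>n+\mu-1$, so the hypothesis does not imply what you need (take $n=1$, $r=1/2$, $\mu=1/10$). The clean fix is precisely the one-term bound above, which eliminates the second piece entirely when $r\le1$; your two-term splitting together with the inductive reduction you sketch is then only required for $r>1$. Your worry about pushing $\langle D\rangle^s$ through is handled, as the paper says, by ``similarly one can treat the derivatives'': $\partial_x^\alpha$ applied to the kernel produces a finite sum of terms $(\partial^\beta\px^r)\,\partial^{\alpha-\beta}k(x-y)$, and each $\partial^\beta\px^r$ is a sum of functions $\px^{r-j}$ times bounded factors, satisfying the same (indeed better) difference bounds.
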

\begin{proof} 
Writing explicitly the commutator we have:
$$[\px^r, H_{\varphi, q}]v= \int_{\R^n}\int_{\R^n}e^{i(x-y)\xi}(\px^{r}-\langle y \rangle^{r})\varphi(\xi)q(\xi)v(y)dy \db \xi.$$
By the homogeneity properties of $q(\xi),$ arguing as in the proof of Lemma \ref{homodist}, we have that the kernel $K(x,y)$ of the operator above satisfies the following estimates:
$$|K(x,y)|\leq C \langle x-y \rangle^{-n-\mu+r}$$
and the same estimates hold for all the derivatives. In particular, if $\mu-r>-n/2$ then by Young inequality, the operator $[\px^r, H_{\varphi, q}]$ maps continuously $L^1(\R^n)$ into $L^2(\R^n)$, whereas if $\mu-r>0$, it is bounded on $L^2(\R^n)$. Similarly one can treat the derivatives and obtain Sobolev continuity and the estimates \eqref{sob1} and \eqref{sob2}. The lemma is then proved. \end{proof}

\end{section}


\begin{section}{Proof of the main result}
In this section we prove Theorem \ref{thm1}. We can assume without loss of generality that $F(u)=u^k$ for some integer $k \geq 2$ and that $p(\xi)$ is of the form \eqref{linop1} with $h=1, m=m_1=M$, i.e. $p(\xi)=p_0+p_m(\xi)$ with $p_m(\xi)$ non-polynomial positively homogeneous  function of order $m$ with $[m]>n/2.$ The extension to the general case is obvious. We first give a preliminary result.
\begin{lemma}\label{moreregdec}
Under the assumptions of Theorem \ref{thm1} we have $ u \in H^{s+1,1}(\R^n).$
\end{lemma}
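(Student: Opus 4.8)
The plan is to bootstrap from the hypothesis $u\in H^{s,\veps_o}(\R^n)$ with $s>n/2$ to the slightly improved statement $u\in H^{s+1,1}(\R^n)$, which serves as the base case for the iteration carried out in the rest of Section 4. First I would record that $s>n/2$ makes $H^s(\R^n)$ an algebra, so $F(u)=u^k\in H^s$; combined with $u\in H^{s,\veps_o}$ and the algebra/multiplication properties of weighted Sobolev spaces (see \cite{Co}) one gets $u^k\in H^{s,k\veps_o}$, in any case $u^k\in H^{s,\veps_o}$. Then, since $p(\xi)=p_0+p_m(\xi)$ satisfies the ellipticity condition \eqref{ell}, the symbol $p(\xi)^{-1}$ is bounded and behaves like $\pxi^{-m}$ at infinity, so $p(D)^{-1}$ gains $m\ge 1$ derivatives; applying it to the equation $p(D)u=u^k$ and noting $m\ge[m]\ge 1$ (indeed $[m]>n/2\ge 0$ forces $m\ge 1$) yields immediately $u=p(D)^{-1}(u^k)\in H^{s+m}\subseteq H^{s+1}$, i.e. the gain of one derivative in the first index.

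The more delicate point is the simultaneous gain of one power of $\px$. The natural device is to commute $\px$ (or rather the multiplication operators $x_j$) past $p(D)^{-1}$ using the commutator identities just established: splitting $p(\xi)^{-1}=\varphi(\xi)p(\xi)^{-1}+(1-\varphi(\xi))p(\xi)^{-1}$ with $\varphi\in C_o^\infty$ equal to $1$ near the origin, the high-frequency part $(1-\varphi)p^{-1}$ is a nice symbol of order $-m\le -1$ whose derivatives in $\xi$ again gain decay, so $x_j$ against it is harmless by the standard Sobolev mapping (the analogue of Propositions \ref{commid}, \ref{commid2}). For the low-frequency part $\varphi p^{-1}$, the symbol is smooth and bounded — of homogeneous order $0$ modulo smoothing — so actually no singular-integral subtlety arises here at the level of a single power of $x$; one writes $x_j(\varphi p^{-1})(D)(u^k)=(\varphi p^{-1})(D)(x_j u^k)+\sum(\partial_{\xi_j}\varphi p^{-1})(D)(u^k)$, and since $x_j u^k\in H^s$ (because $u\in H^{s,\veps_o}$ gives $x_j u\in H^{s,\veps_o-1}\subset H^s$ when $\veps_o\le 1$, and one may always shrink $\veps_o$) and $u^k\in H^s$, all terms land in $H^{s+1}$. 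Combining the high- and low-frequency analysis gives $\px\, u=\px\, p(D)^{-1}(u^k)\in H^{s+1}$, which is precisely $u\in H^{s+1,1}(\R^n)$.

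I expect the main obstacle to be bookkeeping rather than conceptual: one must be careful that $\veps_o>0$ is arbitrarily small (so statements like $x_ju\in H^s$, i.e. $\veps_o\le 1$, are legitimate after shrinking) and that the weighted-Sobolev multiplication estimate $H^{s,t_1}\cdot H^{s,t_2}\hookrightarrow H^{s,t_1+t_2}$ is invoked in the correct range $s>n/2$. A secondary point is to confirm that, because $[m]>n/2$, in particular $m\ge 1$ and $p(D)^{-1}$ genuinely gains at least one full derivative; if instead one only wanted $H^{s+m,1}$ the argument is identical. Finally, one should phrase the commutator step uniformly so that it plugs directly into the subsequent induction: at stage $N$ one will have $u\in H^{s+N,N\veps}$-type information and will feed it through the same $p(D)^{-1}$-plus-commutator machinery, now using the full strength of Propositions \ref{commid}--\ref{commid2} and Lemmas \ref{homodist}--\ref{fractional} to handle higher powers of $x$ and the genuinely singular low-frequency behaviour of $D_\xi^\sigma(\varphi p^{-1})$. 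For the present lemma, though, a single power of $x$ keeps us away from that singularity, so the proof is comparatively short.
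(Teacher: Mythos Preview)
Your argument has a genuine gap at the weight step. You claim $x_j u^k\in H^s$ because ``$u\in H^{s,\veps_o}$ gives $x_j u\in H^{s,\veps_o-1}\subset H^s$ when $\veps_o\le 1$''. That inclusion goes the wrong way: for $\veps_o<1$ the weight index $\veps_o-1$ is negative, so $H^{s,\veps_o-1}\supsetneq H^s$, not the reverse; shrinking $\veps_o$ only makes this worse. From $u\in H^{s,\veps_o}$ with an arbitrarily small $\veps_o>0$ you cannot conclude $x_j u\in H^s$ in one shot, nor $x_j u^k\in H^s$ (that would require $k\veps_o\ge 1$). Consequently the term $(\varphi p^{-1})(D)(x_j u^k)$ in your low-frequency commutation is not controlled.

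This is precisely why the paper does not proceed in one step but runs a \emph{bootstrap in the weight}: it multiplies the equation by $\px^{\veps_o+\veps_1}$ with $\veps_1<\veps_o$, controls the commutator $[P,\px^{\veps_o+\veps_1}]$ (after a cut-off, via Lemma~\ref{fractional}) as a bounded operator on $H^s$, and estimates the nonlinear term by $\|\px^{\veps_o+\veps_1}u^k\|_s\le C\|\px^{\veps_o}u\|_s^2\|u\|_s^{k-2}$---this is exactly where $\veps_1<\veps_o$ is used. Iterating finitely many times gives $\px^{\veps}u\in H^s$ for every $\veps\in(0,1)$; only then does the integer-weight step $x_h u\in H^s$ go through, using that $P^{-1}[P,x_h]$ has bounded symbol $(D_{\xi_h}p)/p$ and that now $\|\px u^k\|_s\le C\|\px^{1/2}u\|_s^2\|u\|_s^{k-2}<\infty$. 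Your single commutation of $x_j$ past $p(D)^{-1}$ would be fine once $u\in H^{s,\veps}$ with $\veps\ge 1/k$ is already in hand, but it cannot replace the iteration that gets you there from an arbitrarily small $\veps_o$.
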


\begin{proof}
We first prove that $u \in H^{s+1}(\R^n)$ that is $D_j u \in H^s(\R^n)$ for every $j \in \{1,\ldots, n\}.$ Differentiating \eqref{fequation} we obtain  $$P(D_ju)=D_jf+D_ju^k. $$ The assumption \eqref{ell} and the condition $M \geq 1$ imply that $P$ is invertible with symbol $1/p(\xi)$, and the operator $P^{-1}\circ D_j$ is bounded on $H^s(\R^n)$. Then we have $$D_ju=P^{-1}(D_jf)+P^{-1}(D_ju^k)$$ and since $u^k \in H^s(\R^n)$ by Schauder's estimates, we obtain $$\|D_ju \|_{s} \leq C_s (\|f\|_{s}+\|u\|_{s}^k)<\infty.$$ Starting from the assumption $\px^{\veps_o}u \in H^s(\R^n)$, we now prove by a bootstrap argument that $ u \in H^{s,1}(\R^n),$ that is $\px u \in H^s(\R^n)$. First, let $\ve_1< \min\{\ve_o, 1-\ve_o\}$, so that $\ve_o+\ve_1<1.$ Multiplying both sides of \eqref{fequation} by $\px^{\ve_o+\ve_1}$ and introducing commutators we have
$$P(\px^{\ve_o+\ve_1}u)=[P, \px^{\ve_o+\ve_1}]u + \px^{\ve_o+\ve_1}f+ \px^{\ve_o+\ve_1}u^k$$ and then 
\begin{equation}\label{boot}\px^{\ve_o+\ve_1}u= P^{-1}[P,\px^{\ve_o+\ve_1}]u +P^{-1}( \px^{\ve_o+\ve_1}f)+P^{-1}( \px^{\ve_o+\ve_1}u^k). \end{equation}
Now we write explicitly the commutator $$[P,\px^{\ve_o+\ve_1}]u = \iint e^{i( x-y) \xi}(\langle y \rangle^{\ve_o+\ve_1}-\px^{\ve_o+\ve_1})p_m(\xi)u(y)dy \dslash \xi.$$
Let $\varphi \in C_o^{\infty}(\R^n)$ such that $\varphi(\xi)=1$ for $|\xi|\leq 1$. Then we can decompose the commutator as follows:
$$[P,\px^{\ve_o+\ve_1}]u= Q_1u(x)+Q_2u(x),$$ where $$Q_1u(x)= \iint e^{i( x-y) \xi}(\langle y \rangle^{\ve_o+\ve_1}-\px^{\ve_o+\ve_1})\varphi(\xi)p_m(\xi)u(y)dy \dslash \xi,$$ and $$Q_2u(x)= \iint e^{i( x-y) \xi}(\langle y \rangle^{\ve_o+\ve_1}-\px^{\ve_o+\ve_1})(1-\varphi(\xi))p_m(\xi)u(y)dy \dslash \xi.$$ By Lemma \ref{fractional} with $q(\xi)=p_m(\xi), \mu=m$ and $r=\ve_o+\ve_1$, since $0<\ve_o+\ve_1<1$ and the condition $[m]>n/2$ implies $m\geq1,$ we have  $m-\ve_o-\ve_1>0$, then $Q_1$ is bounded on $H^s(\R^n)$, then the same is true for $P^{-1}\circ Q_1.$ On the other hand, $Q_2$ is an operator with smooth amplitude of order $m$, then by the classical theory, see \cite{Co, Ho}, we have that $P^{-1} \circ Q_2$ is bounded on $H^s(\R^n)$. In conclusion, we have that
$$\|P^{-1}[P, \px^{\ve_o+\ve_1}]u\|_{s} \leq C_s \| u\|_{s}< \infty.$$ Moreover, by Schauder's lemma we have, since $\ve_1<\ve_o$:
$$\|P^{-1}(  \px^{\ve_o+\ve_1}u^k)\|_{s} \leq C_s\|\px^{\ve_o+\ve_1}u^k\|_{s} \leq C'_s\|\px^{\veps_o}u\|^2_{s}\cdot \|u\|_{s}^{k-2} <\infty.$$
Hence $$\|\px^{\ve_o+\ve_1}u\|_{s} \leq C_s (\|\px^{\ve_o+\ve_1}f\|_{s} + \|u\|_{s}+\|\px^{\ve_o}u\|^2_{s}\cdot \|u\|_{s}^{k-2})<\infty.$$
Then $\px^{\ve_o+\ve_1}u \in H^s(\R^n)$. Possibly iterating this argument a finite number of times we obtain $\px^{\ve} u \in H^s(\R^n)$ for every $\ve \in (0,1).$ To obtain that $u \in H^{s, 1}(\R^n)$ we need a further step. Of course, it is sufficient to show that $x_h u \in H^s(\R^n)$ for any $h=1,\ldots, n.$ Arguing as for \eqref{boot} we have: $$
\| x_hu\|_s\leq C_s( \|P^{-1}[P,x_h]u\|_s +\|P^{-1}( x_h f)\|_s+\|P^{-1}( x_h u^k)\|_s).$$
Now, $P^{-1}[P,x_h]$ is the Fourier multiplier with symbol $\frac{(D_{\xi_h}p)(\xi)}{p(\xi)}$ which is bounded on $H^s(\R^n)$. Moreover,
$$\|P^{-1}( x_h u^k)\|_s \leq C_s \| x_h u^k\|_s \leq C'_s \|\px u^k \|_s \leq C''_s \|\px^{1/2}u \|^2_s \cdot \|u\|_s^{k-2}<\infty$$
by the previous step. Then we obtain that $x_h u \in H^s(\R^n), h=1,\ldots,n,$ i.e. $ u \in H^{s,1}(\R^n).$
Finally we prove that $x_h D_j u \in H^s(\R^n)$ for every $h,j \in \{1,\ldots,n \},$ that is $u \in H^{s+1,1}(\R^n).$ Starting from \eqref{fequation} and arguing as before we get
$$x_h D_j u=P^{-1}(x_h D_j f)+P^{-1}(x_h D_j u^k)+P^{-1}[P, x_h]D_j u.$$ Clearly  we have $\|  P^{-1}(x_h D_j f)\|_s < \infty.$ Moreover, 
\begin{multline*} \| P^{-1}(x_h D_j u^k)\|_s \leq C_s(\|P^{-1}\circ D_j(x_h u^k)\|_s + \|P^{-1}[x_h,D_j]u^k\|_s ) \\ \leq C'_s(\|\px u^k)\|_s + \|u^k\|_s)\leq C''_s\|\px u\|_s \cdot \|u\|_s^{k-1}<\infty. \end{multline*} Concerning the commutator we can repeat readily the argument used before and obtain that
$$\| P^{-1}[P, x_h]D_j u\|_s \leq C_s \|D_ju\|_s <\infty.$$ The lemma is then proved.
\end{proof}

\textit{Proof of Theorem \ref{thm1}.} We divide the proof in two steps. \\

\noindent
\textbf{First step.} Let us set 
\begin{equation}\label{kr}
k_{cr}= \max \{j \in \N : j < m+n/2\} = \begin{cases} [m+n/2] \quad \textit{if} \quad m+n/2 \notin \N \\ m+n/2-1 \quad \textit{if} \quad m+n/2 \in \N \end{cases}.\end{equation}
We first prove that $u \in C^{\infty}(\R^n)$ and $D^{\alpha}u \in H^{s,|\alpha|+k_{cr}}(\R^n)$ for every $\alpha \in \N^n$. This is equivalent to show that for every fixed $\alpha,\beta, \rho \in \N^n,$ with $|\beta|\leq |\alpha|$ and $|\rho|\leq k_{cr},$ we have $x^{\rho+\beta}D^{\alpha}u \in H^s(\R^n).$ This will be proved by induction on $|\rho+\alpha|.$ For $|\rho+\alpha|=1,$ the assertion is given by Lemma \ref{moreregdec}. Assume now that $x^{\rho+\beta}D^{\alpha}u \in H^{s}(\R^n)$ for $|\rho|\leq k_{cr}$, $|\beta|\leq |\alpha|$ and $|\rho+\alpha|\leq N$ for some positive integer $N$ and let us prove the same for $|\rho+\alpha|=N+1.$ We first apply $x^{\beta}D^{\alpha}$ to both sides of \eqref{fequation} and introduce commutators. We obtain
$$P(x^{\beta}D^{\alpha}u)=x^{\beta}D^{\alpha}f+x^{\beta}D^{\alpha}u^k -[ x^{\beta}D^{\alpha}, P]u.$$
By Proposition \ref{commid} we get
\begin{multline}
P(x^{\beta}D^{\alpha}u)=x^{\beta}D^{\alpha}f+x^{\beta}D^{\alpha}u^k \\ - \sum_{0\neq \gamma \leq \beta}\sum_{\stackrel{\ta, \tb}{|\tb|\leq|\ta|<|\alpha|}}C_{\alpha \beta \gamma \ta \tb \tg}D^{\tg}\circ (D_{\xi}^{\gamma}p_m)(D)(x^{\tb}D^{\ta}u),
\label{prima} \end{multline} 
where $|\tg|=|\gamma|.$
We now multiply both sides of \eqref{prima} by $x^{\rho}$ and write $$P(x^{\rho+\beta}D^{\alpha}u)=x^{\rho}P(x^{\beta}D^{\alpha}u)+[P,x^{\rho}](x^{\beta}D^{\alpha}u).$$ We have, by Proposition \ref{commid2}:
\begin{multline}
\label{seconda}
P(x^{\rho+\beta}D^{\alpha}u)= x^{\rho+\beta}D^{\alpha}f + x^{\rho+\beta}D^{\alpha}u^k  
- \hskip-5pt \sum_{0\neq \sigma \leq \rho}\hskip-3pt \binom{\rho}{\sigma}(-1)^{|\sigma|} (D_{\xi}^{\sigma}p_m)(D)(x^{\rho-\sigma+\beta}D^{\alpha}u) \\
\hskip-3pt - \sum_{0\neq \gamma \leq \beta}\sum_{\stackrel{\ta, \tb}{|\tb|\leq|\ta|<|\alpha|}}C_{\alpha \beta \gamma \ta \tb \tg}x^{\rho} D^{\tg}\circ (D_{\xi}^{\gamma}p_m)(D)(x^{\tb}D^{\ta}u).\end{multline}
Applying again Proposition \ref{commid2} with $q(\xi)= \xi^{\tg}D^{\gamma}_{\xi}p_m(\xi)$ and re-setting the sums, we obtain for new constants $C_{\alpha \beta \gamma \ta \tb \tg \rho \sigma}$
\begin{multline}
\label{terza}
P(x^{\rho}x^{\beta}D^{\alpha}u)= x^{\rho+\beta}D^{\alpha}f + x^{\rho+\beta}D^{\alpha}u^k \\ 
+ \sum_{\gamma \leq \beta, \sigma \leq \rho,|\sigma|\leq [m]}\sum_{\stackrel{|\tb|\leq |\ta|\leq|\alpha|}{|\alpha|-|\ta|+|\sigma|>0}}C_{\alpha \beta \gamma \ta \tb \tg \rho \sigma}p_m^{\gamma,\tg,\sigma}(D)(x^{\rho-\sigma+\tb}D^{\ta}u) \\  
+ \sum_{\gamma \leq \beta, \sigma \leq \rho,|\sigma|>m}\sum_{\stackrel{|\tb|\leq |\ta|\leq |\alpha|}{|\alpha|-|\ta|+|\sigma|>0}}C_{\alpha \beta \gamma \ta \tb \tg \rho \sigma}p_m^{\gamma,\tg,\sigma}(D)(x^{\rho-\sigma+\tb}D^{\ta}u),\end{multline}
where $p_m^{\gamma,\tg,\sigma}(D)$ is the Fourier multiplier with symbol $p_m^{\gamma, \tg, \sigma}(\xi)=D_{\xi}^{\sigma}(\xi^{\tg}D_{\xi}^{\gamma}p_m(\xi)).$ Notice that if $|\sigma|\leq [m],$ then $D_{\xi}^{\sigma}(\xi^{\tg}D_{\xi}^{\gamma}p_m(\xi))$ is well defined and locally bounded on $\R^n$, see Lemma \ref{homo}. If $|\sigma|>m$, then $m-|\sigma| \geq m-k_{cr}>-n/2$, then in particular $p_m^{\gamma,\tg,\sigma}(\xi) \in L^1_{\textrm{loc}}(\R^n)$ and defines a homogeneous distribution of order $m-|\sigma|.$ Let now $\varphi \in C^{\infty}_o(\R^n)$ with $\varphi(\xi)=1$ for $|\xi| \leq 1$. For $|\sigma|>m$ we can write $p_m^{\gamma, \tg, \sigma}(\xi)=p_{m,1}^{\gamma, \tg, \sigma}(\xi)+p_{m,2}^{\gamma, \tg, \sigma}(\xi)$, where $p_{m,1}^{\gamma, \tg, \sigma}(\xi)=(1-\varphi(\xi))p_m^{\gamma, \tg, \sigma}(\xi)$ and $p_{m,2}^{\gamma, \tg, \sigma}(\xi)= \varphi(\xi)p_m^{\gamma, \tg, \sigma}(\xi).$
Then we can invert $P$ and take Sobolev norms. We get
\begin{multline}
\label{quarta}
\|x^{\rho+\beta}D^{\alpha}u\|_{s} \leq \|P^{-1}( x^{\rho+\beta}D^{\alpha}f)\|_{s}+\|P^{-1}(x^{\rho+\beta}D^{\alpha}u^k)\|_{s}\\
\sum_{\gamma \leq \beta, \sigma \leq \rho,|\sigma|\leq [m]}\sum_{\stackrel{|\tb|\leq |\ta|\leq|\alpha|}{|\alpha|-|\ta|+|\sigma|>0}} |C_{\alpha\beta\gamma\ta\tb\tg\rho\sigma}|\cdot \|P^{-1}\circ p_m^{\gamma, \tg, \sigma}(D)(x^{\rho-\sigma+ \tb}D^{\ta}u)\|_{s} \\
+\sum_{\gamma \leq \beta, \sigma \leq \rho,|\sigma|>m}\sum_{\stackrel{|\tb|\leq |\ta|\leq|\alpha|}{|\alpha|-|\ta|+|\sigma|>0}} |C_{\alpha\beta\gamma\ta\tb\tg\rho\sigma}|\cdot \|P^{-1}\circ p_{m,1}^{\gamma, \tg, \sigma}(D)(x^{\rho-\sigma+ \tb}D^{\ta}u)\|_{s} \\+\sum_{\gamma \leq \beta,\sigma \leq \rho,|\sigma|>m}\sum_{\stackrel{|\tb|\leq |\ta|\leq|\alpha|}{|\alpha|-|\ta|+|\sigma|>0}}|C_{\alpha\beta\gamma\ta \tb\tg\rho\sigma}|\cdot \|P^{-1}\circ p_{m,2}^{\gamma,\tg,\sigma}(D)(x^{\rho-\sigma+\tb}D^{\ta}u)\|_{s},
\end{multline}
where $p_{m,j}^{\gamma, \tg, \sigma}(D),$ $j=1,2$ denote the operators associated to the symbols $p_{m,j}^{\gamma, \tg, \sigma}(\xi),$ $j=1,2.$ 
We want to estimate the five terms in the right-hand side of \eqref{quarta}. The first is finite by assumption. Concerning the nonlinear term,
if $\rho=\beta=0$, by the boundedness of $P^{-1} \circ D_{j}, j=1,\ldots,n,$ using Leibniz formula and Schauder's estimates, we get:
$$\|P^{-1}D^{\alpha}u^k\|_{s} \leq C_s \|D^{\alpha-e_j}u^k\|_{s}\leq C_{s\alpha}\|u\|_{s+|\alpha|-1}^k <\infty$$ by the inductive assumption. 
If $\rho+\beta \neq 0,$ we can write:
$$x^{\rho+\beta}D^{\alpha}u^{k}= k x^{\rho+\beta}u^{k-1}D^{\alpha}u + x^{\rho+\beta}\sum_{\stackrel{\alpha_{1}+\ldots+\alpha_{k}=\alpha}{|\alpha_{j}|<|\alpha|\forall j}} \frac{\alpha!}{\alpha_{1}! \ldots \alpha_{k}!} D^{\alpha_{1}}u \cdot \ldots \cdot D^{\alpha_{k}}u.$$ Moreover, since $|\beta|\leq |\alpha|,$ we can write $\beta=\beta_{1}+\ldots+\beta_{k}$ for some $\beta_{j}$ satisfying $|\beta_{j}|\leq |\alpha_{j}|, j=1,\ldots, k.$ Then we have, for some $\ell \in \{1,\ldots, n\}:$
\begin{multline*}\|x^{\rho+\beta}D^{\alpha}u^{k}\|_{s} \leq C_{s,\alpha}(\|x^{\rho+\beta-e_{\ell}}D^{\alpha}u\|_{s} \cdot \|x_{\ell}u\|_{s}\cdot \|u\|_{s}^{k-2}  \\
 + \sum_{\stackrel{\alpha_{1}+\ldots+\alpha_{k}=\alpha}{|\alpha_{j}|<|\alpha|\forall j}}\|x^{\rho+\beta_{1}}D^{\alpha_{1}}u\|_{s}\cdot \prod_{j=2}^{k}\|x^{\beta_{j}}D^{\alpha_{j}}u\|_{s}) <\infty
\end{multline*}
by the inductive assumption. The third and the fourth term in the right-hand side of \eqref{quarta} can be easily estimated inductively observing that by Lemma \ref{homo}, the operators $P^{-1} \circ p_m^{\gamma,\tg,\sigma}(D)$ with $|\sigma| \leq [m]$ and $P^{-1}\circ p_{m,1}^{\gamma,\tg,\sigma}(D)$ are both bounded on $H^s(\R^n)$ and that $|\tb|\leq |\ta|$ and  $|\rho-\sigma+\ta|<|\rho+\alpha|$ since $|\alpha|-|\ta|+|\sigma|> 0.$ Concerning the last term, the estimate is more delicate since we have to deal with singular operators. Nevertheless, we can apply Lemma \ref{homodist} with $q(\xi)=p_m^{\gamma,\tg,\sigma}(\xi)$ and $\mu=m-|\sigma|\geq m-k_{cr}>-n/2$. We obtain
$$\|P^{-1}\circ p^{\gamma, \tg,\sigma}_{m,2}(D)(x^{\rho-\sigma+ \tb}D^{\ta}u )\|_{s} \leq C_s \|x^{\rho-\sigma+ \tb}D^{\ta}u \|_{H^s_1}.$$
 Moreover, 
\begin{equation}\label{matter}\|x^{\rho-\sigma+ \tb}D^{\ta}u \|_{H^s_1} \leq C_s \|\px^{|\rho|-1+ |\tb|}D^{\ta}u \|_{s}.\end{equation} As a matter of fact, we have, for some $k \in \{1,\ldots, n\}:$ 
\begin{eqnarray}\|x^{\rho-\sigma+ \tb}D^{\ta}u \|_{L^1} &\leq& C \| \px^{-|\sigma|+1}\px^{|\rho|-1+ |\tb|}D^{\ta}u\|_{L^1}\nonumber \\ &\leq& C \|\px^{-|\sigma|+1}\|_{L^2} \cdot \|\px^{|\rho|-1+ |\tb|}D^{\ta}u\|_{L^2} \nonumber \\
&\leq& C' \|\px^{|\rho|-1+ |\tb|}D^{\ta}u\|_{L^2}, \label{spinosa} \end{eqnarray}
by H\"older inequality, since the condition $|\sigma|>m$ implies $|\sigma|\geq [m]+1>n/2+1$ and this gives $\px^{-|\sigma|+1} \in L^2(\R^n).$ Similar estimates can be proved for the derivatives and give \eqref{matter}.  In conclusion, we obtain 
$$\|P^{-1}\circ p_{m,2}^{\gamma,\tg,\sigma}(D)(x^{\rho-\sigma+ \tb}D^{\ta}u)\|_{s} \leq C_s \|\px^{|\rho|-1 +|\tb|}D^{\ta}u \|_{s}<\infty $$ by the inductive assumption.\\

\noindent \textbf{Second step.} Let now $\tau$ be the fractional part, i.e. $0<\tau<1, k_{cr}+\tau<m+n/2.$ To conclude the proof we need to prove that $\px^{\tau}x^{\rho+\beta}D^{\alpha}u \in H^s(\R^n)$ for every $\rho, \alpha, \beta \in \N^n$ with $|\beta|\leq |\alpha|$ and $|\rho|\leq k_{cr}.$ Starting from the identity \eqref{terza}, multiplying both sides by $\px^{\tau}$ and introducing commutators, we obtain:
\begin{multline} \label{quinta}
P(\px^{\tau}x^{\rho+\beta} D^{\alpha}u) = [P, \px^{\tau}](x^{\rho+\beta} D^{\alpha}u) +\px^{\tau}P(x^{\rho+\beta}D^{\alpha }u)\\
=[P, \px^{\tau}](x^{\rho+\beta} D^{\alpha}u) +\px^{\tau}x^{\rho+\beta}D^{\alpha }f+\px^{\tau}x^{\rho+\beta}D^{\alpha }u^k \\+ \hskip-4pt \sum_{\gamma \leq \beta, \sigma \leq \rho,|\sigma|\leq [m]}\sum_{\stackrel{|\tb|\leq |\ta|\leq |\alpha|}{|\alpha|-|\ta|+|\sigma|>0}}C_{\alpha \beta \gamma \ta \tb \tg \rho \sigma}p_m^{\gamma,\tg,\sigma}(D)(\px^{\tau}x^{\rho-\sigma+ \tb}D^{\ta}u) \\ +  \hskip-4pt \sum_{\gamma \leq \beta, \sigma \leq \rho,|\sigma|\leq [m]}\sum_{\stackrel{|\tb|\leq |\ta|\leq |\alpha|}{|\alpha|-|\ta|+|\sigma|>0}}C_{\alpha \beta \gamma \ta \tb \tg \rho \sigma}[\px^{\tau},p_m^{\gamma,\tg,\sigma}(D)](x^{\rho-\sigma+ \tb}D^{\ta}u) \\ +\sum_{\gamma \leq \beta, \sigma \leq \rho,|\sigma|> [m]}\sum_{\stackrel{|\tb|\leq |\ta|\leq |\alpha|}{|\alpha|-|\ta|+|\sigma|>0}}C_{\alpha \beta \gamma \ta \tb \tg \rho \sigma}p_{m,1}^{\gamma,\tg,\sigma}(D)(\px^{\tau} x^{\rho-\sigma+ \tb}D^{\ta}u) \\+\sum_{\gamma \leq \beta, \sigma \leq \rho,|\sigma|> [m]}\sum_{\stackrel{|\tb|\leq |\ta|\leq |\alpha|}{|\alpha|-|\ta|+|\sigma|>0}}C_{\alpha \beta \gamma \ta \tb \tg \rho \sigma}[\px^{\tau},p_{m,1}^{\gamma,\tg,\sigma}(D)](x^{\rho-\sigma+ \tb}D^{\ta}u) \\ + \sum_{\gamma \leq \beta, \sigma \leq \rho,|\sigma|> [m]}\sum_{\stackrel{|\tb|\leq |\ta|\leq |\alpha|}{|\alpha|-|\ta|+|\sigma|>0}}C_{\alpha \beta \gamma \ta \tb \tg \rho \sigma}\px^{\tau}p_{m,2}^{\gamma,\tg,\sigma}(D)(x^{\rho-\sigma+ \tb}D^{\ta}u).
 \end{multline}
At this point we can apply $P^{-1}$ to both sides of \eqref{quinta} and take Sobolev norms. We already know that $P^{-1}$ and $P^{-1} \circ p_m^{\gamma, \tg,\sigma}(D)$ for $|\sigma| \leq [m]$ and $P^{-1}[P, \px^{\tau}]$ are bounded on $H^s(\R^n)$. Moreover, we recall that $p_{m,1}^{\gamma,\tg,\sigma}(D)$ is a Fourier multiplier with smooth symbol of negative order, then it is bounded on $H^s(\R^n)$. For the same reason, since $\tau <1$, we have that $[\px^{\tau},p_{m,1}^{\gamma,\tg,\sigma}(D)]$ is a pseudodifferential operator with smooth and bounded symbol, then it is also bounded on $H^s(\R^n)$.  We obtain
\begin{multline}
\|\px^{\tau}x^{\rho+\beta} D^{\alpha}u\|_{s} \leq C_s(\|x^{\rho+\beta}D^{\alpha }u\|_s + \|\px^{\tau}x^{\rho+\beta}D^{\alpha }f\|_{s} + \|\px^{\tau}x^{\rho+\beta}D^{\alpha }u^k\|_{s})\\
+\sum_{\gamma \leq \beta, \sigma \leq \rho,|\sigma|\leq [m]}\sum_{\stackrel{|\tb|\leq |\ta|\leq |\alpha|}{|\alpha|-|\ta|+|\sigma|>0}}C_{s\alpha \beta \gamma \ta \tb \tg \rho \sigma}\cdot \|\px^{\tau}x^{\rho-\sigma+ \tb}D^{\ta}u\|_{s} \\
+\sum_{\gamma \leq \beta, \sigma \leq \rho,|\sigma|\leq [m]}\sum_{\stackrel{|\tb|\leq |\ta|\leq |\alpha|}{|\alpha|-|\ta|+|\sigma|>0}}C_{s\alpha \beta \gamma \ta \tb \tg \rho \sigma} \cdot \|[\px^{\tau},p_m^{\gamma,\tg,\sigma}(D)]x^{\rho-\sigma+ \tb}D^{\ta}u\|_{s}\\
+\sum_{\stackrel{\gamma \leq \beta, \sigma \leq \rho}{|\sigma|>m}}\sum_{\stackrel{|\tb|\leq |\ta|\leq|\alpha|}{|\alpha|-|\ta|+|\sigma|>0}}C_{s\alpha \beta \gamma \ta \tb \tg  \rho \sigma } \|\px^{\tau}x^{\rho-\sigma+ \tb}D^{\ta}u\|_{s} \\
\sum_{\stackrel{\gamma \leq \beta, \sigma \leq \rho}{ |\sigma|> m}}\sum_{\stackrel{|\tb|\leq |\ta|\leq|\alpha|}{|\alpha|-|\ta|+|\sigma|>0}}C_{s\alpha \beta \gamma \ta \tb\tg  \rho \sigma} \|x^{\rho-\sigma+ \tb}D^{\ta}u\|_{s} \\
+\sum_{\stackrel{\gamma \leq \beta, \sigma \leq \rho}{ |\sigma|> m}}\sum_{\stackrel{|\tb|\leq |\ta|\leq|\alpha|}{|\alpha|-|\ta|+|\sigma|>0}}C_{s\alpha \beta \gamma \ta \tb \tg \rho \sigma } \|\px^{\tau}p_{m,2}^{\gamma,\tg,\sigma}(D)(x^{\rho-\sigma+ \tb}D^{\ta}u)\|_{s},
\label{sesta}\end{multline}
where $C_{s\alpha \beta \gamma \ta \tb \tg \rho \sigma }$ are positive constants.
Let us now estimate the terms in the right-hand side of \eqref{sesta}. The first is finite by the previous step of the proof, the second by assumption. Concerning the nonlinear term, we can write as before
\begin{multline*}
\px^{\tau}x^{\rho+\beta}D^{\alpha }u^k= k x^{\rho+\beta}D^{\alpha }u \cdot \px^{\tau}u \cdot u^{k-2} \\
+ \sum_{\stackrel{\alpha_1+\ldots+\alpha_k=\alpha}{|\alpha_j|<|\alpha|}}\frac{\alpha!}{\alpha_1!\ldots \alpha_k!}x^{\rho+\beta_1}D^{\alpha_1}u \cdot \px^{\tau}x^{\beta_2}D^{\alpha_2}u \cdot \prod_{j=3}^{k}x^{\beta_j}D^{\alpha_j}u. \end{multline*}
where $|\beta_j|\leq|\alpha_j|, j=1,\ldots, k$ and the last product does not appear if $k=2.$
Then we have the following estimate:
\begin{multline*}\|\px^{\tau}x^{\rho+\beta}D^{\alpha }u^k\|_{s} \leq C_s \|x^{\rho+\beta}D^{\alpha }u\|_{s}\cdot \|\px^{\tau}u\|_{s} \cdot \|u\|_{s}^{k-2} \\+  \sum_{\stackrel{\alpha_1+\ldots+\alpha_k=\alpha}{|\alpha_j|<|\alpha|}}\frac{\alpha!}{\alpha_1!\ldots \alpha_k!}
\| x^{\rho+\beta_1}D^{\alpha_1}u\|_{s} \cdot \|\px^{\tau}x^{\beta_2}D^{\alpha_2}u\|_{s}\cdot \prod_{j=3}^{k}\|x^{\beta_j}D^{\alpha_j}u\|_{s} \\
\leq C_s \|u\|_{s+|\alpha|,k_{cr}+|\alpha|} \cdot \|\px^{\tau} u\|_{s}\cdot \|u\|_{s}^{k-2} + C_{s\alpha}\|u\|^k_{s+|\alpha|, k_{cr}+|\alpha|} <\infty, \end{multline*} since $\tau<1 \leq k_{cr}.$
To estimate the fourth term, we observe that, since $|\alpha|-|\ta|+|\sigma|>0$, then, if $\sigma \neq 0$, we have $$\|\px^{\tau}x^{\rho-\sigma+ \tb}D^{\ta}u\|_{s}\leq C_s \|\px^{|\rho|+|\tb|}D^{\ta}u\|_s <\infty.$$ If $\sigma =0$, then $|\alpha|-|\ta|>0$ so that $|\tb|+1 \leq |\ta|+1\leq |\alpha|$. Hence 
$$ \|\px^{\tau}x^{\rho+ \tb}D^{\ta}u\|_{s}\leq C_s \|\px^{|\rho|+|\tb|+1}D^{\ta}u \|_s \leq C'_s \|u \|_{s+|\alpha|,k_{cr}+|\alpha|}<\infty$$ by the previous step. The fifth term is more delicate to estimate. After cutting-off the amplitude of the commutator we can apply Lemma \ref{fractional} with $r=\tau, q(\xi)= p_m^{\gamma,\tg,\sigma}(\xi), \mu=m-|\sigma|$ and since $m-|\sigma|-\tau>-n/2$, the operator $[\px^{\tau},p_m^{\gamma,\tg,\sigma}(D)]$ can be written as the sum of a bounded operator on $H^s(\R^n)$ and a continuous operator $H_1^s(\R^n) \to H^s(\R^n).$ Hence we have:
\begin{multline*}\| [\px^{\tau},p_m^{\gamma,\tg,\sigma}(D)] x^{\rho-\sigma+ \tb}D^{\ta}u\|_{s} \leq C_s (\| x^{\rho-\sigma+ \tb}D^{\ta}u\|_{s}+\|x^{\rho-\sigma+ \tb}D^{\ta}u\|_{H^s_1}) \\ C'_s \|\px^{|\rho|+|\tb|}D^{\ta}u\|_s <\infty \end{multline*}
by H\"older inequality, since $|\sigma|>n/2.$
The sixth and the seventh term in the right-hand side of \eqref{sesta} are obviously finite. Concerning
the last term, we can write  \begin{eqnarray*}\px^{\tau}p_{m,2}^{\gamma,\tg,\sigma}(D)(x^{\rho-\sigma+ \tb}D^{\ta}u) &=& p_{m,2}^{\gamma,\tg,\sigma}(D)(\px^{\tau}x^{\rho-\sigma+ \tb}D^{\ta}u) \\ &&- [p_{m,2}^{\gamma,\tg,\sigma}(D), \px^{\tau}](x^{\rho-\sigma+ \tb}D^{\ta}u)
\end{eqnarray*} and apply Lemmas \ref{homodist} and \ref{fractional} with $q(\xi)=p_m^{\gamma, \tg, \sigma}(\xi), r=\tau, \mu=m-|\sigma|$. We obtain:
\begin{multline*}\|\px^{\tau}p_{m,2}^{\gamma,\tg,\sigma}(D)(x^{\rho-\sigma+ \tb}D^{\ta}u)\|_{s}  \leq C_s\|\px^{\tau} x^{\rho-\sigma+\tb}D^{\ta}u\|_{H^s_1} \\
\leq C'_{s}\| \px^{\tau-|\sigma|}\px^{|\rho|+|\tb|}D^{\ta}u\|_{H^s_1}\leq C''_s \|\px^{|\rho|+|\tb|}D^{\ta}u\|_{s}<\infty. \end{multline*}
arguing as in the proof of \eqref{spinosa}.
The theorem is then proved.
\qed
\end{section}


\vspace{1cm}
\noindent
$^{\textrm{a}}$ Dipartimento di Matematica, Universit\`a di Torino, via Carlo Alberto 10, 10123 Torino, Italy (Corresponding author) \\
{\small\bf email: marco.cappiello@unito.it}\\

\noindent
$^{\textrm{b}}$ Dipartimento di Matematica e Informatica, Universit\`a di Cagliari, via Ospedale 72, 09124 Cagliari, Italy \\
{\small\bf email: todor@unica.it}\\ 

\noindent
$^{\textrm{c}}$ Dipartimento di Matematica, Universit\`a di Torino, via Carlo Alberto 10, 10123 Torino, Italy  \\
{\small\bf email: luigi.rodino@unito.it} 

\end{document}